\documentclass[a4paper, oneside]{article}

\usepackage[british]{babel}
\usepackage{csquotes}
\usepackage[colorlinks=true]{hyperref}
\usepackage{pdfpages,stmaryrd,amsmath,amssymb,xcolor,tikz}

\title{Computable Ergodic Optimisation}
\author{Gayral Léo, Hoyrup Mathieu}
\date{}

\usepackage[a4paper]{geometry}

\usepackage{float}
\usepackage{caption}

\usepackage[backend=biber,style=alphabetic,maxbibnames=9,language=british,useprefix=false]{biblatex}

\DeclareFieldFormat*{title}{\textit{#1}}

\DeclareFieldFormat{doi}{\mbox{\href{https://www.doi.org/#1}{\nolinkurl{#1}}}}

\DeclareFieldFormat{url}{\mbox{\href{https://www.#1}{\nolinkurl{#1}}}}

\DeclareFieldAlias{eprint:hal}{hal}
\DeclareFieldFormat{hal}{
\mbox{\href{https://hal.archives-ouvertes.fr/#1}{\nolinkurl{#1}}}}

\DeclareFieldAlias{eprint:arXiv}{arXiv}
\DeclareFieldFormat{arXiv}{
\mbox{\href{https://arxiv.org/abs/#1}{\nolinkurl{arXiv:#1}}}}

\renewbibmacro*{doi+eprint+url}{\usebibmacro{eprint}
\iffieldundef{eprint}{\printfield{doi}\iffieldundef{doi}{\usebibmacro{url}}{}}
}

\usepackage{amsthm}
\theoremstyle{plain}
\newtheorem{theorem}{Theorem}[section]
\newtheorem{proposition}[theorem]{Proposition}

\newtheorem{corollary}[theorem]{Corollary}
\newtheorem{definition}[theorem]{Definition}
\newtheorem{remark}[theorem]{Remark}

\newcommand{\one}{\raisebox{0.9pt}{\textcircled{\raisebox{-0.9pt} {1}}}\xspace}
\newcommand{\two}{\raisebox{0.9pt}{\textcircled{\raisebox{-0.9pt} {2}}}\xspace}

\renewcommand{\epsilon}{\varepsilon}
\renewcommand{\phi}{\varphi}

\newcommand{\dom}{\mathrm{dom}}
\newcommand{\diam}{\mathrm{diam}}
\newcommand{\supp}{\mathrm{supp}}
\newcommand{\Mmax}{\M_\mathrm{max}}
\newcommand{\Lip}{\mathrm{Lip}}
\newcommand{\integral}[2]{\int #1\,\mathrm{d}#2}

\newcommand{\tuple}[1]{\left\langle #1\right\rangle}
\newcommand{\set}[1]{\left\{ #1\right\}}
\newcommand{\abs}[1]{\left| #1\right|}
\newcommand{\st}{\, :\, }

\usepackage{amssymb}
\newcommand{\A}{\mathcal{A}}

\newcommand{\F}{\mathcal{F}}
\newcommand{\G}{\mathcal{G}}
\newcommand{\M}{\mathcal{M}}
\newcommand{\Cont}{\mathcal{C}^0}

\usepackage{xspace}
\newcommand{\ie}{\emph{i.e.}\@\xspace}
\newcommand{\eg}{\emph{e.g.}\@\xspace}
\newcommand{\ce}{c.e.\@\xspace}

\usepackage{dsfont}
\newcommand{\N}{\mathds{N}}
\newcommand{\Z}{\mathds{Z}}
\newcommand{\Q}{\mathds{Q}}
\newcommand{\R}{\mathds{R}}

\begin{document}

\maketitle

\begin{abstract}
Links between physicals systems and computability properties have been an active field of investigation in recent years. Inspired by a previous work \cite{GaySabTaa25} in the context of positive temperature Gibbs measures, we prove here that in the context of zero-temperature ergodic optimisation, for a computable potential and provided with several reasonable assumptions, the maximum ergodic average is a computable real number, and the set of maximising measures is a $\Pi_1$-computable compact set.

Then, in the more specific context of symbolic dynamics, with finite-range interactions on subshifts of finite type, we provide an explicit algorithm to compute both the maximum ergodic average and the set of maximising measures in finite time, with a matching code repository \cite{Gay26}.
\end{abstract}

\section{Introduction}

Ergodic optimisation can be interpreted as the study of the zero-temperature limit behaviours of a thermodynamic model, in statistical physics. These theoretical arguments help us reach a better understanding of what actually happens in a physical material at low temperatures, notably regarding the emergence of (quasi)crystalline (a)periodic structures. We redirect interested readers to Jenkinson's relatively recent survey \cite{Jen19} for more details and context on many typical results on ergodic optimisation, as we will here focus on a quite barebones general definition, to better highlight the seldom studied computational aspects.

We start from a \emph{dynamical system} $(X,d,T)$, where $(X,d)$ is a compact metric space, and $T:X\to X$ a continuous transformation. Let $\M_T$ the set of \emph{$T$-invariant} probability measures (\ie such that $\mu\circ T^{-1} =\mu$), compact in the weak-$*$ topology. Given a continuous \emph{potential} function $\phi:X\to\R$, we can thus define the \emph{maximum ergodic average} $\beta(\phi)=\max_{\mu\in\M_T}\integral{\phi}{\mu}$ among $T$-invariant probability measures, and the (convex and compact) set of such \emph{$\phi$-maximising measures} $\Mmax(\phi)=\set{\mu\in\M_T\st\integral{\phi}{\mu}=\beta(\phi)}$.

Computable analysis aims at providing a general framework and tools to computationally describe analytic objects from mathematics, such as real numbers or closed subsets for instance. 
For general considerations on computable analysis, we refer the readers to the classical textbook by Weihrauch \cite{Wei00}, as well as more recent works for more details on links with dynamical systems \cite{Col20} and measure and probability theory \cite{Hoy21}. Computability theory has permitted the characterisation of the computably realisable values of many conjugacy invariants \cite{HoMe10,Meyer11,JeanVa14,BoDePoSaTh15,West17,HeSa18,EsNuTor23} of dynamical systems, providing practical tools to distinguish non-equivalent explicitely defined systems. Beyond these invariants, broader links between computability and dynamical systems represent an active area of research \cite{BueGraZho11, BurWol24, CotRez24, GraZho24}.

A notable fact, that will represent an unavoidable limitation for broad general results about ergodic optimisation, is that there exist some ``simple'' computable dynamical systems for which the invariant measures are known to be \emph{non-computable} \cite{GalHoyRoj11,BraGriRo12} (and that's before taking into account any further information regarding the potential, for ergodic optimisation).

A few existing results concern more specifically the interactions between statistical physics and computability, such as the computability of $\beta\mapsto\sup_{\mu\in\M_T} h(\mu)-\beta\integral{\phi}{\mu}$ (the \emph{pressure} function) on some classes of one-dimensional subshifts \cite{Spandl08,BuDaWoYa22} and higher-dimensional full shifts \cite{GaySabTaa25}, or of some thermodynamic invariants like the residual entropy \cite{BurWol20}. The general framework for computable analysis, and the specific case of computable dynamical systems will be properly introduced in Section \ref{sec:computability}.

In the context of ergodic optimisation, the space of potentials we consider (continuous, Lipschitz, Hölder, finite-range\dots) has a huge influence on the typical (\ie generic) behaviour of the model.
For instance, in the general case of continuous potentials, we have many properties that hold generically (uniqueness of the maximising measure, which has full support, zero entropy, weak but \emph{not} strong mixing...) \cite{Jen06,Bré08,Mor10,EntMiȩ20}. In comparison, if the map $T$ is \emph{expanding} (such as the shift on a one-sided space $\A^\N$, or the multiplication $\times 2$ on the circle $\R/\Z$), generically for Lipschitz potentials, the maximising measures have a \emph{periodic} support instead \cite{Con16}.

In practice, a lot of the models we consider, those we define in an explicit way, prove to be computable. Moreover, regardless, it is a prerequisite for us to be able to do any kind of numeric simulation. Hence, it makes sense to restrict ourselves to the class of \emph{computable potentials} (which is notably \emph{not} a real vector space, as not all real numbers are computable) and look at what computable properties follow. We might thus assume that $\phi$ (\ie the ``input'' of the problem) is computable, and study the complexity of $\beta(\phi)$ and $\Mmax(\phi)$ (\ie the ``outputs'' of the problem). Using mostly folkloric computability tools, in Section \ref{sec:complexity-bounds}, we identify several relevant properties for a \emph{computable} dynamical system such that the maximum ergodic average $\beta(\phi)$ is computable, with a $\Pi_1$ upper bound on the complexity of $\Mmax(\phi)$. These relevant properties are then studied on a few representative examples in Section \ref{sec:examples}.

Even though we have guarantee $\beta$ is computable under these hypotheses, the corresponding algorithm is unsatisfying, as it informally requires to enumerate $\N$ and wait until \emph{something} happens (at an unknown point in time, simply known to be finite). Hence, in Section \ref{sec:SFT}, in the more specific case of symbolic dynamics, on a subshift of finite type $X_\F\subset\A^\N$ (explicitly defined by a set of forbidden patterns $\F$), with a finite-range potential $\phi:\A^r\to\R$, we explicit an algorithm computing new forbidden patterns $\F(\phi)\supset\F$ such that $\Mmax(\phi)=\M_T\left(X_{\F(\phi)}\right)$.

\section{Background on Computability}\label{sec:computability}

In this section, we introduce from scratch computability theory while trying to stay as far from the technical details as reasonably possible, in order to define all the necessary notions for our results. Most of the notions in this section are well-documented in textbooks \cite{Wei00,BraPre03}, so we will only provide references for more specific results.

\subsection{Computability on the Natural Numbers}

There are many equivalent notions of computability (Turing machines, lambda calculus, general recursive functions\dots). For the sake of simplicity, we will here invoke a very high level view of the notion, with computable functions seen as those represented by an \emph{algorithm} (using the usual \emph{if/else} statements and \emph{for/while} loops), provided with a natural integer input, and expecting a natural integer output.

Notably, a computable function $f:\dom(f)\to\N$ can be partial, with $x\in\dom(f)$ if and only if the corresponding algorithm \emph{halts} when given $x\in\N$ as an input, in which case it outputs $f(x)\in\N$. We say that $f$ is total when $\dom(f)=\N$.

We fix an explicit bijection $\tuple{.}:\N^*\to\N$, that encodes any tuple as an integer. This bijection (and other explicit ones such as $\N\cong\Z$ or $\N\cong\Q$) allows us to transpose all the computability notions from $\N$ to $\N^*$ and more broadly to any reasonable countable set.

A set $X\subset\N$ is said to be \textbf{computably enumerable (\ce)} if it is the image set $f(\N)$ of a computable (partial) function $f$. Equivalently, $X$ is \ce if and only if it is the domain of a computable function. A set $X\subseteq\N$ is \textbf{computable} if its characteristic function $\mathds{1}_X:\N\to\set{0,1}$ is computable \emph{and total}. Equivalently, $X$ is computable if and only if both $X$ and its complement $\N\setminus X$ are \ce One of the fundamental results from computability theory is the existence of \ce sets that are not computable (this fact was already palpable in Turing's seminal work \cite{Tur37}, though the question of who actually proved/stated what is much more nuanced \cite{HamNen26}).

We can broaden these notions into a hierarchy of decision problems, with $X\subset\N$ being a $\Sigma^0_k$ set if it satisfies $x\in X\Leftrightarrow\exists n_1\in\N,\forall n_2\in\N,\exists n_3\in\N,\dots,\left(x,n_1,\dots,n_k\right)\in A$ with $k$ alternating $\exists\forall$ quantifiers over $\N$ and $A\subseteq\N^{k+1}$ a computable set. The \ce sets are precisely the $\Sigma^0_1$ sets.

Complementarily, $X$ is $\Pi^0_k$ if the leftmost quantifier is a $\forall$ instead of a $\exists$ (or, equivalently, if the complementary set $\N\setminus X$ is $\Sigma^0_k$).

\subsection{Computability on the Real Numbers}

Now, most topological spaces used in analysis, such as real numbers, are \emph{not} countable. Hence, we need to broaden the previous notions of standard computability in order to properly do computable analysis.

\begin{definition}[Hierarchy of Computable Real Numbers]\label{def:real}
A real number is \textbf{$\Sigma_1$-computable} if it is the supremum of a computable sequence of rational numbers (\ie a total computable function $f:\N\to\Q$).
Likewise, a real number is $\Pi_1$-computable if it is the infimum of such a sequence.

A \textbf{$\Delta_1$-computable} real number $x$ (simply called \textbf{computable} from now on) is both $\Sigma_1$ and $\Pi_1$. Equivalently, there is a total computable function $f:\N\to\Q$ that gives arbitrarily good approximations of $x$, such that $\abs{x-f(n)}\leq\frac{1}{2^n}$.

We can extend these notions into a broader hierarchy, where $x\in\R$ is $\Sigma_k$ if there is a computable $f:\N^k\to\Q$ such that $x =\sup_{n_1\in\N}\inf_{n_2\in\N}\dots f\left(n_1,n_2,\dots,n_k\right)$ with $k$ alternating $\sup$ and $\inf$ (resp. $\Pi_k$ if the leftmost one is $\inf$).
\end{definition}

This particular notation is not standard in computable analysis, where~$\Sigma_1$ real numbers are usually called left-computably enumerable or lower semi-computable. Still, the notation we use appears in a few articles and has the advantage of reflecting a close relationship between the hierachy of real numbers and the hierarchy we obtained before for subsets of $\N$, with $\sup$ instead of $\forall$ and $\inf$ instead of $\exists$. This correspondence is actually explicit, through the definition of real numbers as rational cuts \cite[Lemma 4.1.18]{Wei00}:
\begin{proposition}\label{prop:real-Pi_k}
A real number $x$ is $\Sigma_k$-computable if and only if the set $\set{q\in\Q\st q<x}\subset\Q$ is $\Sigma^0_k$. A real number~$x$ is~$\Pi_k$ if and only if the set~$\set{q\in\Q\st q>x}\subset\Q$ is~$\Sigma^0_k$.
\end{proposition}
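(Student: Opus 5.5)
The plan is to prove the $\Sigma_k$ case by induction on $k$ and then deduce the $\Pi_k$ case by symmetry. The main tool is Tarski–Kuratowski-style quantifier manipulation, which lets us merge neighbouring quantifiers of the same kind and add dummy ones. Throughout we use the conventions of Definition \ref{def:real}.

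\emph{Forward direction.} Suppose $x=\sup_{n_1}\inf_{n_2}\dots f(n_1,\dots,n_k)$. Write $x_{n_1}=\inf_{n_2}\dots f(n_1,\dots)$; each $x_{n_1}$ is $\Pi_{k-1}$, uniformly in $n_1$. Then
\[
q<x \iff \exists n_1,\ \exists q'\in\Q,\ q<q' \wedge q'<x_{n_1}.
\]
We insert the intermediate rational $q'$ because we need the strict inequality $q<x_{n_1}$ to be witnessed by something strict, which is what the hypothesis delivers. By the induction hypothesis, applied uniformly in $n_1$ (the $\Pi$ version with the roles of $<$ and $>$ swapped), the relation $q'<x_{n_1}$ is $\Sigma^0_{k-1}$ for the upper cut of a $\Pi_{k-1}$ real. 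Concretely: $q'<\inf_{n_2}g(n_2)$ holds iff $\exists q''>q'\ \forall n_2,\ g(n_2)\geq q''$, and $g(n_2)\geq q''$ iff not $g(n_2)<q''$, which is $\Pi^0_{k-2}$, and so on. Prefixing existential quantifiers therefore gives $\Sigma^0_k$. The base case $k=1$ is immediate: $q<\sup f(n)$ iff $\exists n,\ q<f(n)$, which is c.e.

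\emph{Backward direction.} Suppose $L=\{q\st q<x\}$ is $\Sigma^0_k$, say $q\in L\iff\exists n_1\,\forall n_2\dots A(q,n_1,\dots)$. We construct the value directly:
\[
x=\sup_{(q,n_1)} h(q,n_1),\qquad h(q,n_1)=\begin{cases} q & \text{if } \forall n_2\dots A(q,n_1,\dots),\\ -\infty & \text{otherwise.}\end{cases}
\]
We avoid the value $-\infty$ by using a fixed rational lower bound $q_0\in L$ in its place. The number $h(q,n_1)$ equals $\inf_{n_2}$ of a $\Sigma_{k-2}$-style expression: it is $q$ until a counterexample appears, and then drops to $q_0$. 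This is a $\Pi_{k-1}$ real uniformly in $(q,n_1)$, and the key observation is that a $\Pi^0_{k-1}$ predicate $P$ yields the real $q_0+(q-q_0)\mathds{1}_P$, which is $\Pi_{k-1}$. We prove this by a parallel induction. The resulting $\sup\inf\dots$ has $k$ alternations, and merging the pair of indices $(q,n_1)$ into one index via $\tuple{.}$ gives the required form.

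\emph{$\Pi_k$ case.} We apply the $\Sigma_k$ case to $-x$: $x$ is $\Pi_k$ iff $-x$ is $\Sigma_k$, and $\{q\st q>x\}=-\{q\st q<-x\}$.

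The main obstacle is organising the simultaneous induction cleanly. We have to state and prove together that $\Pi^0_{j}$ predicates give $\Pi_j$ indicator reals (and the same for $\Sigma$), uniformly in parameters, and we have to handle strict versus non-strict inequalities when converting $q<\inf$ into quantifier form. We expect the inserted rational $q'$ to take care of the latter.
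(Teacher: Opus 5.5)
\textbf{Gap in the forward direction.} Your reduction of the $\Pi_k$ case to the $\Sigma_k$ case is fine, and so is your backward direction up to a small fix noted below. The inductive step of the forward direction, however, misapplies the induction hypothesis. For the $\Pi_{k-1}$ reals $x_{n_1}$, the hypothesis controls the \emph{upper} cuts $\{q' : q'>x_{n_1}\}$. The relation $q'<x_{n_1}$ is membership in the \emph{lower} cut, and that is not $\Sigma^0_{k-1}$ in general. With $k=2$ and $x_{n_1}=y$ constant, your claim would give every $\Pi_1$ real $y$ a c.e.\ lower cut, making it computable, which is false.

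The same off-by-one appears in your concrete unfolding. There, for $k\geq 3$, you declare the upper cut of the $\Sigma_{k-2}$ real $g(n_2)$ to be $\Sigma^0_{k-2}$. It is only $\Sigma^0_{k-1}$, because $g(n_2)\geq q''\iff\forall q'''<q''\,(q'''<g(n_2))$. Your last sentence also does not follow from your own count. Prefixing existential quantifiers to a $\Sigma^0_{k-1}$ relation gives $\Sigma^0_{k-1}$, not $\Sigma^0_k$, and a genuine $\Sigma^0_{k-1}$ bound would make every $\Sigma_k$ real $\Sigma_{k-1}$.

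\textbf{The repair.} This is exactly the job of your inserted rational, but with a non-strict inequality:
$q<x\iff\exists n_1\,\exists q'>q\,(q'\leq x_{n_1})$.
Here $q'\leq x_{n_1}$ is the complement of $q'>x_{n_1}$, hence $\Pi^0_{k-1}$ uniformly in $(q',n_1)$ by the induction hypothesis. The two existential quantifiers then give $\Sigma^0_k$. State the hypothesis for extended-real values, since the inner infima and suprema may be $\pm\infty$; the quantifier equivalences are unaffected.

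\textbf{Backward direction.} The expression $q_0+(q-q_0)\mathds{1}_P$ has $\Pi_{k-1}$ form only when $q\geq q_0$, because a negative coefficient swaps $\inf$ and $\sup$. Simply use the constant $q_0$ when $q<q_0$. This does not change the outer supremum, since such $q$ are dominated by $q_0\in L$. You also need no parallel induction here, because $\mathds{1}_{\forall n_2\exists n_3\cdots A}=\inf_{n_2}\sup_{n_3}\cdots\mathds{1}_A$ holds directly.

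With these corrections your argument is a complete, self-contained proof. The paper gives no proof of its own and only refers to Weihrauch's textbook.
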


Broadly speaking, if a computability notion has been defined for objects in a set $X$, witnessed by the existence of a computable function $f:\N\to\N$, then a countable family $\left(x_i\right)_{i\in\N}$ of such objects is said to be \textbf{uniformly computable} if there is a single computable function $f:\N^2\to\N$ such that for each $i\in\N$, $f(i,.):\N\to\N$ witnesses the computability of $x_i$. It is stronger than requiring each $x_i$ to be computable individually.

For example, any real number $x\in[0,1]$ can be expressed as $x=\sum_{i\in\N}\frac{b_i}{2^n}$ with each $b_i\in\set{0,1}$ being trivially computable in $\N$, but the sequence $b:\N\to\set{0,1}$ being uniformly computable directly implies $x$ itself is a computable real number (which is not the case for most real numbers, for all but a countable amount of them).

\subsection{Computable Metric Spaces}
We now explain how more general abstract spaces can be endowed with a computability structure.

\begin{definition}
A \textbf{computable metric space} is a triplet $(X,d,S)$ where $(X,d)$ is a separable metric space and $S=\set{s_i\st i\in\N}$ is a dense subset indexed by $\N$, such that the real numbers $\left(d\left(s_i,s_j\right)\right)_{i,j\in\N}$ are uniformly computable.
\end{definition}

To be precise, the uniformly computable condition means that there is a total computable function $\delta :\N^3\to\Q^+$ such that $\delta(i,j,n)$ is a $\frac{1}{2^n}$-approximation of $d\left(s_i,s_j\right)$.

In a computable metric space $(X,d,S)$, a point $x\in X$ is \textbf{computable} if there exists a computable sequence $(i_n)_{n\in\N}$ such that $d\left(s_{i_n},x\right)<2^{-n}$ for all $n$. For instance, in $(\R,d,\Q)$ where $d$ is the Euclidean metric and $\Q$ comes with a natural indexing, a real number is computable in this sense if and only if it is computable in the sense of Definition \ref{def:real}.

In such a space, a \textbf{rational ball} is an open ball $B\left(s_i,r\right)=\set{x\in X\st d\left(s_i,x\right)<r}$ where $r$ is a positive rational number. From a computable enumeration $(r_j)_{j\in\N}$ of the positive rational numbers, we derive a computable enumeration $(B_k)_{k\in\N}$ of the rational balls: $B_k=B(s_i,r_j)$ where $k=\tuple{i,j}$. We say that $U\subseteq X$ is an \textbf{effective open set} if there exists a \ce set $E\subseteq\N$ such that $U=\bigcup_{k\in E}B_k$. Computability of points can be reformulated using rational balls as follows: a point $x\in X$ is computable if and only if the set $\set{k\in\N\st x\in B_k}$ is \ce (this alternative definition can be used to define computable points in topological spaces that are not metrizable, but still countably-based).

An analog of the computability hiearchy of subsets of $\N$ can be defined for closed subsets of a computable metric space.
\begin{definition}
Let $(X,d,S)$ be a computable metric space and $k\geq 1$. A closed set $F\subseteq X$ is $\Pi_k$ if there exists a $\Sigma^0_k$ set $E\subseteq\N$ such that $X\setminus F=\bigcup_{i\in E}B_i$.
\end{definition}

In particular, a closed set is $\Pi_1$ if its complement is an effective open set. One could similarly define the $\Sigma_k$ closed sets, but it turns out that they are much less useful.

\begin{remark}
The notation $\Pi_k$ for closed sets should \emph{not} be confused with the standard notion of general $\Pi^0_k$ sets from effective descriptive set theory, which are defined from the effective open sets and using alternating countable intersections and unions rather than $\forall\exists$ quantifiers (we do not use these notions here, so we will not define them properly).

It is true that every $\Pi_k$ set is $\Pi^0_k$, but conversely $\Pi^0_k$ sets are not necessarily closed, and even then they are not $\Pi_k$ in general (unless $k=1$).
 
In the standard terminology from computable analysis, a $\Pi_k$ set is a set that is $\Pi^0_1$ with oracle $\emptyset^{(k-1)}$, where $\emptyset^{(0)}$ is the empty set and $\emptyset^{(n+1)}$ is inductively defined as the halting problem for Turing machines having an access to $\emptyset^{(n)}$ as an oracle.
Similarly, as mentioned earlier the $\Sigma_k$ and $\Pi_k$ real numbers are usually called left-\ce~and right-\ce~with oracle $\emptyset^{(k-1)}$ respectively.
\end{remark}

In Proposition \ref{prop:real-Pi_k}, we saw that the hierarchy of real numbers can be reformulated using the hierarchy of subsets of $\Q$. It is also closely related to the hierarchy of closed subsets of $\R$.
\begin{proposition}\label{prop:real-closed}
A real number $x$ is $\Pi_k$ if and only if the closed set $(-\infty,x]$ is $\Pi_k$ in $\R$.
Symmetrically, a real number $x$ is $\Sigma_k$ if and only if the closed set $[x,+\infty)$ is $\Pi_k$ in $\R$.
\end{proposition}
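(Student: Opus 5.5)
We reduce the statement to Proposition \ref{prop:real-Pi_k} by rewriting the complement of $(-\infty,x]$ as a union of rational balls indexed by a set built from $\set{q\in\Q\st q>x}$. We treat the $\Pi_k$ case; the $\Sigma_k$ case is symmetric, exchanging the roles of $x\mapsto -x$, or of left and right endpoints and of $\Sigma_k$ and $\Pi_k$ reals. Throughout, the dense set is $\Q$ and rational balls are open intervals $B(s,r)=(s-r,s+r)$ with $s,r\in\Q$, $r>0$.

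\emph{($\Rightarrow$)} Suppose $x$ is $\Pi_k$. By Proposition \ref{prop:real-Pi_k}, the set $U=\set{q\in\Q\st q>x}$ is $\Sigma^0_k$. Define
\[
E=\set{\tuple{i,j}\st s_i-r_j\in U}.
\]
Every ball $B(s_i,r_j)$ with $s_i-r_j>x$ lies in $(x,+\infty)$. Conversely, every $y>x$ lies in such a ball: choose a rational $q$ with $x<q<y$ and a rational center close to $y$. Hence $\bigcup_{k\in E}B_k=(x,+\infty)=\R\setminus(-\infty,x]$. The set $E$ is $\Sigma^0_k$, because it is the preimage of a $\Sigma^0_k$ set under a computable map on indices, and $\Sigma^0_k$ is closed under computable substitution.

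\emph{($\Leftarrow$)} Suppose $(x,+\infty)=\bigcup_{k\in E}B_k$ with $E$ a $\Sigma^0_k$ set. We claim that $q>x$ if and only if
\[
\exists k\in E,\quad \bigl(B_k=B(s,r) \text{ with } s-r<q<s+r\bigr).
\]
The forward direction holds because any $q>x$ lies in the union. The backward direction holds because $q\in B_k\subseteq(x,+\infty)$. The bracketed condition is decidable, since it compares rationals. Prefixing a $\Sigma^0_k$ predicate with an existential quantifier and a decidable conjunct keeps it $\Sigma^0_k$: the new $\exists$ merges into the leading $\exists$ through the pairing $\tuple{.}$. So $\set{q\in\Q\st q>x}$ is $\Sigma^0_k$, and Proposition \ref{prop:real-Pi_k} gives that $x$ is $\Pi_k$.

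The only delicate point is this closure of $\Sigma^0_k$ under computable substitution and under existential projection. Both are standard facts, proved by contracting adjacent like quantifiers via the pairing function. The topological content is trivial, since open subsets of $(x,+\infty)$ that exhaust it are determined by rationals.
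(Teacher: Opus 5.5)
Your proof is correct. The paper states this proposition without proof, as a companion to Proposition \ref{prop:real-Pi_k}, so there is no argument of its own to compare against. Your reduction to that proposition is the routine verification it leaves implicit: for one direction you index the rational intervals whose left endpoint lies in $\set{q\in\Q\st q>x}$, and for the other you use $q>x\iff\exists k\in E,\ q\in B_k$.

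Two cosmetic points only. The covering step should name a radius explicitly, e.g.\ a rational $r<(y-x)/2$ and a rational centre $s$ with $\abs{s-y}<r$, which guarantees $s-r>x$. You also use the letter $k$ both for the level of the hierarchy and for ball indices.
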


\begin{definition}[Computable Function]\label{def:computable-function}
Let $\left(X,d_X,S_X\right)$ and $\left(Y,d_Y,S_Y\right)$ be two computable metric spaces. A function $f:X\to Y$ is \textbf{computable} if the sets $\left(f^{-1}\left(B^Y_k\right)\right)_{k\in\N}$ are uniformly effective open sets, \ie if there exists a \ce set $E\subseteq\N^2$ such that, for every $k\in\N$, we have $f^{-1}(B^Y_k)=\bigcup_{(k,l)\in E}B_l^X$.
\end{definition}

As the preimage of each rational ball $B_k^Y$ is an open set, and these balls form a basis of the topology on $Y$, it follows that a computable function is continuous.

There are several equivalent definitions of computable functions, expressing the existence of an algorithm that, fed with arbitrary approximations of $x$, produces arbitrary approximations of $f(x)$. In particular, if $f:X\to Y$ is a computable function, then for any computable point $x\in X$, $f(x)\in Y$ is a computable point as well (and this second property is slightly weaker than the computability of $f$).

It turns out that, thus defined, computable functions interact with the effective versions of classical topological notions (open sets, compact sets\dots) in the same way continuous functions interact with those topological notions (we will see examples below). In Definition \ref{def:computable-function}, we formulated computability as the effective version of continuity notably because it makes these relationships much more straightforward.

If $f:X\to Y$ is a computable function and $P\subseteq Y$ is $\Pi_k$, then its preimage $f^{-1}(P)$ is also $\Pi_k$. However, in general $\Pi_k$ sets are not preserved by taking direct images, in particular because the image of a closed set by a continuous function is not always closed. However, we will see below that such a result holds under a computable compactness assumption about $X$ (just like, for a continuous function, the image of a compact set is a compact set).

\subsection{Computability of Compact and Closed Sets}

\begin{definition}
Let $(X,d,S)$ be a computable metric space. A compact subset $K\subseteq X$ is \textbf{computably compact} if its covering set $\set{\tuple{k_0,\ldots,k_n}\in\N^*\st K\subseteq B_{k_0}\cup\ldots\cup B_{k_n}}$ is \ce
\end{definition}

We say that the space $X$ is computably compact if, as a subset of itself, it is a computably compact set. Notably, for a computably compact space $X$, given any positive rational radius $r$, we can compute in finite time a covering of $X$ by rational balls of radius $r$ (it is known to exist, so we only need to let the enumeration run long enough to encounter one). In a computably compact space, a set is computably compact if and only if it is $\Pi_1$. The image of a computably compact set by a computable function is computably compact.

When the space is compact but not computably so, the complexity of the covering set of a $\Pi_1$ closed set (which is necessarily compact itself) may increase, but only from $\Sigma^0_1$ ( \ce) to $\Sigma^0_2$.

\begin{proposition}\label{prop:compact}
If $(X,d,S)$ is compact, then the covering set of any $\Pi_1$ closed set is $\Sigma^0_2$.

\begin{proof}
Let $K$ a $\Pi_1$ closed set (\ie $X\setminus K$ is effectively open). Let $\left(l_i\right)_{i\in\N}$ be a computable sequence of natural numbers such that $X\setminus K=\bigcup_{i\in\N}B_{l_i}$. For a rational ball $B_k=B(s,r)$ and a rational factor $a\in (0,1)$, let $\overline{B}^a_k=\overline{B}(s,ar)=\set{x\in X\st d(s,x)\leq ar}$. One has the following equivalences:
\begin{align*}
K\subseteq B_{k_0}\cup\ldots\cup B_{k_n}&\Longleftrightarrow X\subseteq B_{k_0}\cup\ldots\cup B_{k_n}\cup\bigcup_{i\in\N}B_{l_i} &(\text{as }X=K\sqcup X\setminus K)\\
&\Longleftrightarrow\exists p, X\subseteq B_{k_0}\cup\ldots\cup B_{k_n}\cup\bigcup_{i\leq p}B_{l_i}&(\text{by compactness})\\
&\Longleftrightarrow\exists p,a, X\subseteq\overline{B}^a_{k_0}\cup\ldots\cup\overline{B}^a_{k_n}\cup\bigcup_{i\leq p}\overline{B}^a_{l_i}&(\text{by compactness})\\
&\Longleftrightarrow\exists p,a,\forall i, s_i\in\overline{B}^a_{k_0}\cup\ldots\cup\overline{B}^a_{k_n}\cup\bigcup_{i\leq p}\overline{B}^a_{l_i}\, .&(\text{by density of }\left(s_i\right)_{i\in\N})
\end{align*}
Observe now that, as the real numbers $d\left(s_i,s_j\right)$ are uniformly computable, the predicate $s_i\in\overline{B}^a_k$ is $\Pi_1^0$ (\ie it can be replaced by some formula $\forall t,\, g(a,i,k,t)$ with $g$ a computable algorithm). It follows that the predicate in the previous equivalence is itself $\Sigma^0_2$.
\end{proof}
\end{proposition}

Hierarchies of subsets of $\N$ and of real numbers are symmetric, in the sense that each class $\Pi^0_k$ or $\Pi^k$ comes with its dual notion $\Sigma^0_k$ or $\Sigma_k$.

In practice, it turns out that the usuful ``dual'' notion of $\Pi_k$ for closed sets is not, as one would naturally expect, that of a $\Sigma_k$ closed set (nor an analogous definition for open sets). Intuitively, a closed set is $\Pi_1$ if its complement can be enumerated as a union of rational balls. It does not make sense to enumerate the closed set itself, which is usually uncountable, but the next notion turns out to be a useful way of describing the set ``from inside''.

\begin{definition}
Let $(X,d,S)$ be a computable metric space $(X,d,S)$. A closed set $A\subseteq X$ is \textbf{computably overt} if the set $\set{k\st A\cap B_k\neq\emptyset}$ is \ce
\end{definition}

Under the assumption that the metric is complete, computable overtness can be characterized in a particularly simple way \cite[Theorem 3.8]{BraPre03}.
\begin{proposition}\label{prop:overt}
When the metric is complete, a closed set is computably overt if and only if it contains a uniformly computable dense sequence.
\end{proposition}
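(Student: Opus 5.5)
We have to prove Proposition \ref{prop:overt}: when the metric is complete, a closed set $A$ is computably overt if and only if it contains a uniformly computable sequence that is dense in $A$. The two directions are treated separately.

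\textbf{Easy direction.} Suppose $(x_n)_{n\in\N}$ is a uniformly computable sequence of points of $A$, dense in $A$. Since $A$ is closed and the sequence is dense in it, for any rational ball $B_k$ we have
\[
A\cap B_k\neq\emptyset \iff \exists n,\ x_n\in B_k .
\]
The set $\set{(n,k)\st x_n\in B_k}$ is \ce uniformly: to test $d(s_i,x_n)<r$ for $B_k=B(s_i,r)$, compute approximations of $x_n$ and of the distances $d(s_i,s_j)$ until a strict inequality is witnessed. Projecting this \ce set onto $k$ gives a \ce set, so $A$ is computably overt.

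\textbf{Hard direction.} Suppose $\set{k\st A\cap B_k\neq\emptyset}$ is \ce. For each $k$ in this set we build, uniformly in $k$, a computable point $x_k\in A\cap B_k$.

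Start from $B_k=B(s,r)$. We search, in the \ce set, for a nested chain of balls $B_{k_0}=B_{k}$, then $B_{k_1}=B(s_1,r_1)$, $B_{k_2},\dots$ satisfying the following conditions.
\begin{itemize}
\item Each ball meets $A$ (it is enumerated in the \ce set).
\item Each ball is formally included in the previous one, namely $d(s_{j},s_{j+1})+r_{j+1}<r_{j}$. This is a \ce condition, since it only involves strict inequalities between computable reals.
\item The radii shrink, $r_j<2^{-j}$.
\end{itemize}
Existence at each step follows from density of $S$: if $a\in A\cap B_{k_j}$, choose $s_{j+1}$ close enough to $a$ and a small radius $r_{j+1}$. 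Then $B_{k_{j+1}}$ contains $a$, so it meets $A$, and it is formally included in $B_{k_j}$. Since such a ball exists, a dovetailed search finds one in finite time, and we keep the first one found.

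The centres $s_j$ form a Cauchy sequence with $d(s_j,s_{j+1})<r_j<2^{-j}$. By completeness they converge to a point $x$. This point is computable because the $s_j$ give explicit approximations: $d(s_j,x)\leq\sum_{m\geq j}2^{-m}=2^{-j+1}$. Formal inclusion gives $d(s_j,x)\leq r_j$, so $x$ lies in the closure of every ball of the chain, and in particular in $B_k$ itself, thanks to the strict margin at the first step.

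It remains to check that $x\in A$. For every $j$ there is a point $a_j\in A$ with $d(a_j,s_j)<r_j$, hence $d(a_j,x)\leq 2r_j\to0$. Since $A$ is closed, $x\in A$.

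Finally, enumerate the \ce set as $k(0),k(1),\dots$ and set $x_n=x_{k(n)}$. This is a uniformly computable sequence, and it is dense in $A$ because it meets every rational ball that meets $A$. If $A$ is empty, the statement holds trivially, or we adopt the convention that the empty sequence is allowed. Note that the uniformity of the construction is what makes the search legitimate.

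The main obstacle is making the limit point land in $A$, since $A$ is closed but we have no direct access to it. It is handled by requiring every chosen ball to meet $A$ while the radii tend to zero, and then invoking closedness together with completeness.
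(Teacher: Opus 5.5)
Your proof is correct. The paper gives no argument of its own and simply cites \cite[Theorem 3.8]{BraPre03}, and your construction is the standard proof of that result: search for a chain of formally included rational balls, each enumerated as meeting $A$, with radii tending to $0$, so that the centres form a fast Cauchy sequence converging to a point of $A\cap B_k$ by completeness and closedness.

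The only point to fix is the empty case. $\emptyset$ is computably overt but contains no sequence, so the statement does not ``hold trivially'' there: the forward direction needs the hypothesis $A\neq\emptyset$, as in Brattka--Presser, and the paper's statement tacitly omits it too. Relatedly, the condition $r_j<2^{-j}$ can only be imposed for $j\geq 1$, since $r_0$ is given, but this is harmless.
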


The next result summarises the computable complexity of maximising a function over a compact set. These results appear in many places in the literature on computable analysis, but we include a sketch of proof for clarity.
\begin{proposition}\label{prop:max}
Let $K\Subset\R$ be compact:
\begin{itemize}
\item If $K$ is computably compact, then $\max K$ is $\Pi_1$,
\item If $K$ is computably overt, then $\max K$ is $\Sigma_1$.
\end{itemize}

Let $(X,d,S)$ a computable metric space, $f:X\to\R$ computable and $P\subseteq X$ compact.
\begin{itemize}
\item If $P$ is $\Pi_1$, then $\max f$ is $\Pi_2$.
\item If $P$ is computably compact, then $\max f$ is $\Pi_1$,
\item If $P$ is computably overt, then $\max f$ is $\Sigma_1$,
\item If $P$ is both computably compact and overt, then $\max f$ is $\Delta_1$, i.e.~computable.
\end{itemize}
\end{proposition}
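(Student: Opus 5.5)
We treat the one-dimensional statements first and then reduce the statements about $\max f$ on $P$ to them. We work with the characterisation of Proposition \ref{prop:real-Pi_k}: $x$ is $\Pi_1$ (resp.\ $\Sigma_1$) iff $\set{q\in\Q\st q>x}$ (resp.\ $\set{q\st q<x}$) is \ce

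For $K\Subset\R$ computably compact, we have $q>\max K$ iff $K\subseteq(-\infty,q)$. By compactness this holds iff $K$ is covered by finitely many rational balls $B(s_i,r)$ with $s_i+r\le q$. Since $\Q$ is the dense set, the condition $s_i+r\le q$ is decidable. So we enumerate the covering set and list every $q$ for which such a finite cover appears. This makes $\set{q\st q>\max K}$ \ce

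For $K$ computably overt, we have $q<\max K$ iff $K\cap(q,+\infty)\neq\emptyset$. This holds iff some rational ball $B(s,r)$ with $s-r\ge q$ meets $K$, which can be enumerated. Hence $\max K$ is $\Sigma_1$.

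For the second list, we use that $f(P)$ is compact and $\max_P f=\max f(P)$.

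If $P$ is computably compact, then $f(P)$ is computably compact, since images of computably compact sets by computable functions are computably compact as noted above. The first bullet then gives $\Pi_1$.

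If $P$ is computably overt, then $f(P)$ is computably overt. Indeed, $f(P)\cap B\neq\emptyset$ iff $P\cap f^{-1}(B)\neq\emptyset$. Moreover, $f^{-1}(B)$ is uniformly an effective union of balls $B^X_l$, so $f(P)\cap B\neq\emptyset$ iff $\exists l$ with $(k,l)\in E$ and $P\cap B^X_l\neq\emptyset$, which is \ce The second bullet then gives $\Sigma_1$.

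The $\Delta_1$ case is the conjunction of these two.

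The main obstacle is the case $P$ merely $\Pi_1$ in an arbitrary computable metric space, without computable compactness. We aim to show that $\set{q\st q>\max_P f}$ is $\Sigma^0_2$. We have $q>\max_P f$ iff $P\subseteq f^{-1}((-\infty,q))$. The set $f^{-1}((-\infty,q))$ is an effective open set $\bigcup_{l\in E_q}B^X_l$, uniformly in $q$. By compactness of $P$, this is equivalent to $P$ being covered by a finite subfamily. So the condition becomes "$\exists$ a finite tuple from $E_q$ in the covering set of $P$". We then need that the covering set of $P$ is $\Sigma^0_2$, which is the content of Proposition \ref{prop:compact}.

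That proposition assumes the ambient space is compact. So we would either restate the argument with the closed $\Pi_1$ set $P$ itself compact, or work inside a compact ambient set. Here we must take care that the density step ranges over points $s_i$, which may lie outside $P$. We would handle this by quantifying over $s_i$ that are not yet witnessed in the complement of $P$, which is a $\Pi^0_1$ condition, so the predicate stays $\Sigma^0_2$. Combining $\exists$(finite tuple, enumeration stage) with a $\Sigma^0_2$ predicate gives a $\Sigma^0_2$ set of $q$. By Proposition \ref{prop:real-Pi_k}, $\max f$ is then $\Pi_2$.
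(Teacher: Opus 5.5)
Your arguments for the two one-dimensional bullets and for the computably compact and computably overt cases are correct. For overtness, your direct use of balls $B(s,r)$ with $s-r\geq q$ is a little cleaner than the paper's detour through Proposition~\ref{prop:overt}.

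The gap is in the $\Pi_1$ bullet. Your reduction of $q>\max_P f$ to the existence of a finite tuple from $E_q$ in the covering set of $P$ is the paper's. The paper then simply cites Proposition~\ref{prop:compact}, which presupposes that the ambient space $X$ is compact. That holds where the result is used: $P=\M_T(X)$ inside the compact space $\M(X)$. Your proposed way around this hypothesis fails, for two reasons.

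First, restricting the density test to those $s_i$ that lie in $P$ is unsound, because $S\cap P$ need not be dense in $P$. It may even be empty (e.g.\ $P=\set{y}$ with $y$ non-computable), in which case every tuple passes the test. In Proposition~\ref{prop:compact}, density is applied to all of $X$. One first rewrites $K\subseteq\bigcup_j B_{k_j}$ as a cover of $X$ by the $B_{k_j}$ together with the balls enumerating $X\setminus K$. Extracting finitely many of the latter is exactly where compactness of $X$ enters.

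Second, the quantifier count is wrong. The condition ``$s_i\notin P$ or $s_i\in\bigcup_j\overline{B}^a_{k_j}$'' is $\Sigma^0_1\vee\Pi^0_1$. A universal quantifier over $i$ makes it $\Pi^0_2$, and the leading $\exists p,a$ then gives $\Sigma^0_3$, not $\Sigma^0_2$.

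No patch can work, because for non-compact $X$ the bullet is false. Work in Baire space $\N^\N$ and let $Z=Z_0\oplus Z_1\oplus Z_2\oplus Z_3$ with $Z_0=\emptyset$ and $Z_{i+1}=Z_i'$.
\begin{itemize}
\item $\set{Z}$ is a $\Pi^0_2$ singleton of $\set{0,1}^\N$.
\item Interleaving $Z$ with its least Skolem witnesses yields $y\in\N^\N$ with $y(2n)=Z(n)$ and $P=\set{y}$ a compact $\Pi_1$ set.
\item For the computable $f(x)=\sum_n 2^{-n-1}\min(x(2n),1)$, one gets $\max_P f=f(y)=\sum_n 2^{-n-1}Z(n)$.
\item This is a non-dyadic real (since $Z$ is non-computable), so $Z\geq_T\emptyset'''$ is computable from it.
\item But every $\Pi_2$ real is computable from $\emptyset''$, since its right cut is $\Sigma^0_2$. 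Hence $\max_P f$ is not $\Pi_2$.
\end{itemize}
So the right fix is your other option, taken as a hypothesis: assume $X$ itself is compact, which the paper's proof tacitly does. Under that hypothesis Proposition~\ref{prop:compact} applies verbatim, and your argument coincides with the paper's.
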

\begin{proof}
One needs to show that the set $\set{q\in\Q\st\max K<q}$ is \ce to prove the first item. As $K$ is compact (thus bounded), we have $\max K<q$ if and only if there exists a rational number $p<q$ such that $K\subseteq (p,q)$. This relation is \ce when $K$ is computably compact, so $\max K$ is $\Pi_1$ per Proposition \ref{prop:real-closed}.

If $K$ is computably overt then it contains a dense computable sequence by Proposition \ref{prop:overt}, its supremum is $\Sigma_1$ by definition, and that supremum is precisely $\max K$.

Now, if $P$ is $\Pi_1$, then its covering set is $\Sigma^0_2$ by Proposition \ref{prop:compact}, and we must show that the covering set of $f(P)$ is also $\Sigma^0_2$. Let $E\subseteq\N^2$ be a \ce set such that $f^{-1}(B^\R_k)=\bigcup_{(k,l)\in E}B^X_l$. The sets $E_k=\set{l\st (k,l)\in E}$ are uniformly \ce By compactness of $P$, one has:
\begin{align*}
f(P)\subseteq B^\R_{k_0}\cup\ldots\cup B^\R_{k_n}&\Longleftrightarrow P\subseteq\bigcup_{l\in E_{k_0}}B_l^X\cup\dots\cup\bigcup_{l\in E_{k_n}} B_l^X\\
&\Longleftrightarrow\exists l_0,\ldots,l_p\in\bigcup_{0\leq i\leq n} E_{k_i}\text{ such that }P\subseteq\bigcup_{0\leq j\leq p}B^X_{l_j}\, ,
\end{align*}
which is $\Sigma^0_2$ if the covering set of $P$ is. Therefore, $\max f(P)<q$ if and only if $\exists p, f(P)\subseteq (p,q)$ which is $\Sigma^0_2$, so $\max f(P)$ is $\Pi_2$.

The remaining statements follow from the first two assertions, using the fact that taking the image of a set by a computable function preserves computable compactness and computable overtness.
\end{proof}

\subsection{Space of Probability Measures}

When $X$ is compact metrisable, the space $\M(X)$ of Borel probability measures over $X$ endowed with the weak-$*$ topology is metrisable, and a compatible metric is given by the Wasserstein metric
\[
W_1(\mu,\nu)=\sup_{f\in\Lip}\abs{\integral{f}{\mu}-\integral{f}{\nu}}
\]
where $\Lip$ is the set of $1$-Lipschitz functions $f:X\to\R$ \cite[Section 7.1]{AmbGigSav05}. The separability of $X$ implies the separability of $\M(X)$, where a dense sequence of measures is given by the set $D$ of finite convex combinations with rational coefficients of Dirac measures supported on a countable dense subset of $X$. The compactness of $X$ also implies the compactness of $\M(X)$ respectively. These implications also hold in a computable setting (see \cite[Proposition 4.1.3]{HoyRoj09} and \cite[Lemma 2.12]{GalHoyRoj11}).

\begin{proposition}
If $(X,d,S)$ is a computable metric space, then $(\M(X),W_1,D)$ is also a computable metric space. If $X$ is moreover computably compact, then so is $\M(X)$.
\end{proposition}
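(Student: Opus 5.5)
We need two things: that $(\M(X),W_1,D)$ is a computable metric space, and that $\M(X)$ is computably compact when $X$ is.

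\textbf{Computable metric space.} We must show that the distances $W_1(\mu,\nu)$ for $\mu,\nu\in D$ are uniformly computable. Take $\mu=\sum_i a_i\delta_{s_{p_i}}$ and $\nu=\sum_j b_j\delta_{s_{q_j}}$ with rational weights. By Kantorovich--Rubinstein duality, $W_1(\mu,\nu)$ equals the optimal transport cost
\[
\min_{\pi}\sum_{i,j}\pi_{ij}\,d\left(s_{p_i},s_{q_j}\right),
\]
where $\pi$ ranges over couplings with marginals $(a_i)$ and $(b_j)$. This is a finite linear program whose feasible polytope has rational data. Its value is therefore the minimum, over the finitely many vertices of that polytope, of a linear form in the uniformly computable numbers $d(s_{p_i},s_{q_j})$. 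The vertices are computable rational points, so the value is computable uniformly in the codes of $\mu$ and $\nu$. Density of $D$ in $\M(X)$ is classical (already recalled above), and separability of $X$ gives that $D$ is countable with a computable indexing.

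\textbf{Computable compactness.} Fix $r>0$. Since $X$ is computably compact, we can compute a finite $r$-net $s_{i_1},\dots,s_{i_m}$ of $X$ together with a measurable partition subordinate to it. Pushing any $\mu$ to this net moves mass by at most distance $r$, so $W_1$ changes by at most $r$. Then round the resulting weights to multiples of $1/N$, which costs at most $\diam(X)\cdot m/N$ in $W_1$. This yields a computable finite $2r$-net of $\M(X)$ inside $D$, for every rational $r$.

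To turn this into a \ce covering set, we use the standard characterisation: a complete space is computably compact if and only if one can compute, uniformly in $n$, a finite $2^{-n}$-net from the dense sequence. Here $\M(X)$ is complete because it is compact. Via this characterisation, a finite cover by rational balls $B_{k_0},\dots,B_{k_n}$ is certified by finding some $n$ and an explicit finite net all of whose points lie, with margin $2^{-n}$, inside the union. This is a \ce condition because distances are computable.

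\textbf{Main obstacle.} The delicate step is the first one, namely the exact computability of $W_1$ on $D$ through duality together with the finiteness of the linear program. The alternative of approximating the supremum over $1$-Lipschitz functions directly only gives a $\Sigma_1$ bound, so the LP reduction is the route we take.
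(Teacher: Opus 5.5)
Your proof is correct. The paper gives no argument of its own for this proposition: it only cites \cite[Proposition 4.1.3]{HoyRoj09} and \cite[Lemma 2.12]{GalHoyRoj11}. Your two steps are a sound, self-contained version of the standard argument.

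For the first step, working with the primal transport problem is the right choice. Its feasible polytope has rational data, and the computable distances $d(s_{p_i},s_{q_j})$ enter only through the objective. The value is therefore an exact minimum of finitely many computable reals, uniformly in the codes of $\mu,\nu\in D$.

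For the second step, quantised pushforwards onto an $r$-net of $X$ do give finite nets of $\M(X)$ inside $D$. These nets are computable uniformly in $r$. The margin test $d(s,c_{k_j})<r_{k_j}-2^{-n}$ is c.e., and it certifies a cover soundly. It is also complete: since $\M(X)$ is compact, every cover is eventually certified, by a Lebesgue-number argument.

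The only detail to make explicit is that choosing $N$ needs a computable bound in place of $\diam(X)$. You can use $\max_{j,l} d(s_{i_j},s_{i_l})$, because the rounding step only moves mass between net points.
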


The computable structure on $\M(X)$ makes integration of computable functions a computable operator \cite[Corollary 4.3.2]{GalHoyRoj11}.
\begin{proposition}\label{prop:computable-integral}
If $\varphi:X\to\R$ is computable, then the function $\phi^*:\mu\in\M(X)\mapsto\integral{\phi}{\mu}\in\R$ is also computable.

When $X$ is computably compact, this result is uniform in $\phi$, otherwise one needs to know an upper bound on $\abs{\phi}$ for each $\phi$, and encode it in the corresponding algorithm.
\end{proposition}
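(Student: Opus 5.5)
To show that $\phi^*$ is computable in the sense of Definition \ref{def:computable-function}, I will build an algorithm that takes a name of $\mu$ and returns arbitrarily good rational approximations of $\integral{\phi}{\mu}$. The key point is that $\integral{\phi}{\nu}$ can be computed exactly enough on the dense set $D$. If $\nu=\sum_{j\le m} q_j\delta_{s_{i_j}}$ with rational weights $q_j$, then $\integral{\phi}{\nu}=\sum_j q_j\phi(s_{i_j})$. Since $\phi$ is computable and each $s_{i_j}$ is a computable point, these values are uniformly computable. So the restriction of $\phi^*$ to $D$ is uniformly computable in the index of $\nu$.

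The second step is to pass from $D$ to all of $\M(X)$ through an effective modulus of continuity for $\phi^*$ with respect to $W_1$.
\begin{itemize}
\item If $\phi$ were $L$-Lipschitz, we would get $\abs{\integral{\phi}{\mu}-\integral{\phi}{\nu}}\le L\,W_1(\mu,\nu)$ directly.
\item A general computable $\phi$ is only continuous, so I will approximate it uniformly by Lipschitz functions. Given a precision $\epsilon$, I will search for a computable Lipschitz function $\psi$ with a known constant $L$ such that $\|\phi-\psi\|_\infty\le\epsilon$.
\item Then $\abs{\phi^*(\mu)-\phi^*(\nu)}\le 2\epsilon+L\,W_1(\mu,\nu)$.
\item So, given $\mu$ through a sequence $\nu_n\in D$ with $W_1(\nu_n,\mu)<2^{-n}$, it suffices to choose $n$ with $L2^{-n}\le\epsilon$ and output an approximation of $\phi^*(\nu_n)$.
\end{itemize}

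The main obstacle is producing $\psi$ and certifying the bound $\|\phi-\psi\|_\infty\le\epsilon$.

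\emph{Candidate.} I would try Lipschitz functions built from the basis, such as finite max-min combinations of cones $x\mapsto a-L\,d(s_i,x)$ with rational parameters. These are uniformly computable and $L$-Lipschitz.

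\emph{Certification when $X$ is computably compact.} Computable compactness lets us cover $X$ by finitely many rational balls of radius $r$. On each ball, the computability of $\phi$ lets us certify an oscillation bound, by enumerating preimages of small rational intervals until a finite subcover is found. On such a cover we can also verify $\sup\abs{\phi-\psi}<\epsilon$, because this is a $\Sigma^0_1$ property of a continuous function over a computably compact set, by Proposition \ref{prop:max}. A search over all candidates therefore terminates, and the resulting algorithm is uniform in $\phi$.

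\emph{Without computable compactness.} There is no certified uniform approximation, so I would argue differently. From the name of $\mu$ we know $\mu$ is close to a finitely supported $\nu_n$. A bound $M\ge\abs{\phi}$ controls the contribution of the region where $\phi$ is not yet uniformly controlled. The estimate is: mass within distance $\delta$ of the support points of $\nu_n$, where $\phi$ has a certified local modulus, plus at most $M$ times the remaining mass, which is bounded by $W_1(\mu,\nu_n)/\delta$ via Markov's inequality.

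This explains why the uniform statement needs either computable compactness or a supplied bound on $\abs{\phi}$. In the non-compact case, I expect the delicate part to be making the truncation at scale $\delta$ effective, since we only have local moduli at finitely many points.
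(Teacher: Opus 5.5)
Your computably compact case is correct, but it follows a different route from the one the paper relies on. The paper gives no proof here and cites \cite{GalHoyRoj11}. The usual argument in that line of work has three steps: show that $\mu\mapsto\mu(U)$ is lower semicomputable for effective open $U$; deduce lower semicomputability of $\integral{g}{\mu}$ for bounded nonnegative lower semicomputable $g$ via the layer-cake formula; and apply this to $\phi+M$ and $M-\phi$. You instead certify a Lipschitz approximation $\psi$ with $\max_X\abs{\phi-\psi}<\epsilon$. This is c.e.\ uniformly in $\phi,\psi$ under computable compactness, and suitable $\psi$ exist, e.g.\ $\min_i(a_i+L\,d(s_i,x))$ over a fine net with $L$ large. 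Your route gives an explicit modulus $2\epsilon+L\,W_1$ and makes the uniformity in $\phi$ transparent. The semicomputability route needs no modulus of continuity and no effective compactness, and it shows that $M$ is the only extra data required.

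The genuine gap is the case where $X$ is compact but not computably compact. The statement covers this case, already in its first, non-uniform sentence, and you leave it unresolved. Written carefully with a (near-)optimal coupling of $\nu_n$ and $\mu$, your estimate reads $\abs{\integral{\phi}{\mu}-\integral{\phi}{\nu_n}}\le\epsilon+2M\,W_1(\mu,\nu_n)/\delta_n$. Note the factor is $2M$, not $M$, and $\delta_n$ is the least certified radius at the support points of $\nu_n$. The trouble is that $\delta_n$ depends on $n$. Refining $\nu_n$ to shrink $W_1(\mu,\nu_n)$ changes the support points, and the radii you certify there may shrink just as fast. Nothing you wrote guarantees an $n$ with $2M2^{-n}<\epsilon\,\delta_n$, nor tells the algorithm when to stop.

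The missing ingredient is a uniform lower bound obtained from (non-effective) compactness, combined with dovetailing:
\begin{enumerate}
\item The balls $B(s_l,r_l)$ enumerated in the preimages $\phi^{-1}(I)$ of rational intervals $I$ of length $\epsilon$ cover $X$, so finitely many of them do.
\item Over that finite family, $x\mapsto\max_l(r_l-d(s_l,x))$ is continuous and positive, hence bounded below by some $\lambda>0$ on $X$.
\item Since $B(x,r_l-d(s_l,x))\subseteq B(s_l,r_l)$, once those balls have appeared and distances are computed to precision below $\lambda/2$, every support point of every $\nu_n$ receives a certified radius of at least $\lambda/2$, independently of $n$.
\item Therefore a search over pairs $(n,t)$, looking for one where the radii $\rho_j$ certified by stage $t$ at the support of $\nu_n$ satisfy $2M2^{-n}<\epsilon\min_j\rho_j$, always halts.
\item Whichever pair is found first, the coupling estimate gives an error below $2\epsilon$, plus the error in approximating $\integral{\phi}{\nu_n}$.
\end{enumerate}
Alternatively, you can adopt the semicomputability argument above, which avoids local moduli altogether.
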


If $T:X\to X$ is Borel measurable, then let $\M_T(X)$ be the set of $T$-invariant Borel probability measures over $X$.

\begin{proposition}\label{prop:invariant-complexity}
Let $(X,d,S)$ be a computable metric space and $T:X\to X$ be computable. The set $\M_T(X)$ is a $\Pi_1$ subset of $\M(X)$.
\end{proposition}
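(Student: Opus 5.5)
We will show that the complement $\M(X)\setminus\M_T(X)$ is an effective open set. The idea is to write non-invariance as the existence of a single test function witnessing $\integral{f}{\mu}\neq\integral{f\circ T}{\mu}$, where $f$ ranges over a countable, uniformly computable family of bounded functions. Concretely, fix the family $(f_{i,r})$ of truncated distance functions
\[
f_{i,r}(x)=\max\left(0,\,r-d(s_i,x)\right),
\]
indexed by $s_i\in S$ and positive rationals $r$. These functions are bounded by $r$, $1$-Lipschitz, and uniformly computable in $(i,r)$. Since $T$ is computable, the compositions $f_{i,r}\circ T$ are also uniformly computable and bounded by $r$.

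\textbf{Characterising invariance.} The first step is to show that $\mu\in\M_T(X)$ holds if and only if
\[
\integral{f_{i,r}}{\mu}=\integral{f_{i,r}\circ T}{\mu}\quad\text{for all } i,r.
\]
The forward direction is immediate from $\mu\circ T^{-1}=\mu$. For the converse, the equalities say that $\mu$ and $T_*\mu$ agree on every $f_{i,r}$. Pointwise limits of such functions give the indicators of the rational open balls: $\frac{1}{\epsilon}\min(f_{i,r},\epsilon)\nearrow\mathds{1}_{B(s_i,r)}$, and these truncations can be rewritten as differences $\frac{1}{\epsilon}\left(f_{i,r}-f_{i,r-\epsilon}\right)$ of functions in the family. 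Monotone convergence then shows that $\mu$ and $T_*\mu$ agree on all rational balls.

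Finite intersections of rational balls need not be balls, so this step needs care. I will work with finite products $\prod_j f_{i_j,r_j}$, which are still uniformly computable and bounded; alternatively, finite minima converge to the indicators of finite intersections of balls. These intersections form a $\pi$-system generating the Borel $\sigma$-algebra, because $X$ is separable. Dynkin's lemma then gives $\mu=T_*\mu$. So I will enlarge the test family to finite products (or minima) $g$ of the $f_{i,r}$ and use the pairs $(g,\,g\circ T)$. \textbf{This measure-theoretic reduction is the step I expect to be the main obstacle}: the family must stay countable, uniformly computable and bounded, and still determine measures.

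\textbf{Effectivity.} By Proposition~\ref{prop:computable-integral}, for each test function $g$ with its explicit bound, the map
\[
\mu\mapsto\integral{g}{\mu}-\integral{g\circ T}{\mu}
\]
is computable, uniformly in the index of $g$, since the bounds are known explicitly. For a computable function $h:\M(X)\to\R$, the preimage $h^{-1}\left(\R\setminus\{0\}\right)$ is effectively open, because $\R\setminus\{0\}$ is an effective union of rational balls. Taking the union over the countable index set gives
\[
\M(X)\setminus\M_T(X)=\bigcup_{g}h_g^{-1}\left(\R\setminus\{0\}\right),
\]
which is a uniform c.e.\ union of effective open sets and hence effectively open. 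Therefore $\M_T(X)$ is $\Pi_1$.
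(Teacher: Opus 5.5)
Your proof is correct. The paper states this proposition without proof, as folklore background, so there is no in-paper argument to match. The usual argument is organised differently from yours.

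\textbf{The standard route.} One first shows that the push-forward $T_*:\mu\mapsto\mu\circ T^{-1}$ is a computable map $\M(X)\to\M(X)$; this is itself a consequence of computable integration applied to functions $g\circ T$. One then writes $\M_T(X)=\set{\mu\st W_1(\mu,T_*\mu)=0}$, which is the preimage of the diagonal under the computable map $\mu\mapsto(\mu,T_*\mu)$. Since the metric is computable, $\set{(\mu,\nu)\st W_1(\mu,\nu)>0}$ is effectively open, and preimages of $\Pi_1$ sets under computable maps are $\Pi_1$. This route needs no measure-determining family, because separating $\mu$ from $T_*\mu$ is handed to the fact that $W_1$ is a metric.

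\textbf{What your route buys.} You make the witnessing test functions explicit and use nothing about $\M(X)$ beyond Proposition~\ref{prop:computable-integral} with explicit bounds. Your argument therefore does not depend on which metric is placed on $\M(X)$, which matters when $X$ is not compact. The price is the measure-theoretic step.

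\textbf{The measure-theoretic step.} You were right to flag it. Agreement on balls alone does not determine a Borel probability measure on a general metric space: R.~O.~Davies constructed a compact metric space carrying two distinct probability measures that agree on all balls. Passing to finite intersections and applying Dynkin's $\pi$--$\lambda$ theorem is therefore genuinely needed.

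Both of your enlargements of the test family close up correctly. With minima, $\min\left(\min_j f_{i_j,r_j},\epsilon\right)=\min_j f_{i_j,r_j}-\min_j f_{i_j,r_j-\epsilon}$ for $\epsilon<\min_j r_j$. With products, you can expand $\prod_j\left(f_{i_j,r_j}-f_{i_j,r_j-\epsilon}\right)$ multilinearly. Either way, monotone convergence gives agreement on the $\pi$-system of finite intersections of rational balls, which generates the Borel $\sigma$-algebra by separability.
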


The set $\M_T(X)$ is not computably overt in general. It was shown in \cite{GalHoyRoj11} that there exists a computable function from the circle to itself that has no computable invariant measure, so in that case $\M_T(X)$ does not contain any computable point.

\section{General Theoretical Upper Bounds}\label{sec:complexity-bounds}

\begin{definition}[Computable Dynamical System]
A computable dynamical system $(X,d,S,T)$ is, expectedly, a computable metric space $(X,d,S)$ provided with a computable transformation $T:X\to X$.
\end{definition}

In the context of \emph{ergodic optimisation}, our default assumption here is that $X$ is compact (but \emph{not necessarily} computably so) and that the potential $\phi:X\to\R$ is computable. Consider the following hypotheses:
\begin{itemize}
\item[ \one] $X$ (and thus incidentally $\M(X)$ and $\M_T(X)$) is computably compact.
\item[ \two] $\M_T(X)$ is computably overt.
\end{itemize}

As explained in a general setting in the previous section, \one means that the compactness of $X$ can be expressed (thus used) computationally. In particular, $\M_T(X)$ can be described ``from the outside'' by eleminating non-intersecting balls. This property usually follows from the fact that the space $X$ has a ``nice'' and explicit definition.

Hypothese \two is more complex to obtain (see Section \ref{sec:examples} for a conversation on this topic in the context of ergodic optimisation) and implies that $\M_T(X)$ can itself be seen as a computable space, computably endowed with its own countable dense family.

\begin{theorem}\label{thm:upper-bound}
Let $(X,d,S,T)$ be a computable dynamical system, with $X$ compact, and let $\phi:X\to\R$ a computable potential. The following complexity upper bounds hold:

\begin{tabular}{|c|c|c|}\hline
Hypotheses & $\beta(\phi)$ in $\R$ & $\Mmax(\phi)$ as a compact in $\M(X)$\\\hline
General Case & $\Pi_2$ & $\Pi_3$\\\hline
 \one ($X$ is comput.\@ compact) & $\Pi_1$ & $\Pi_2$\\\hline
 \two ($\M_T(X)$ is comput.\@ overt) & $\Sigma_1$ & $\Pi_1$\\\hline
 \one{+}\two & $\Delta_1$ & $\Pi_1$\\\hline
\end{tabular}

\begin{proof}
As $\phi$ is computable, $\phi^*:\mu\in\M(X)\mapsto\integral{\phi}{\mu}$ is itself computable by Proposition \ref{prop:computable-integral}.

The real number $\beta(\phi)$ is thus the maximum of $\phi^*$ on the subset $\M_T(X)$ (which is $\Pi_1$ from Proposition \ref{prop:invariant-complexity}), so it is $\Pi_2$ (from Proposition \ref{prop:max}). The other bounds on $\beta(\phi)$ follow by likewise applying Proposition \ref{prop:max} when $\M_T(X)$ is computably compact and/or overt.

Now, as $\M_T(X)$ itself is a $\Pi_1$ set (Proposition \ref{prop:invariant-complexity}), it suffices to obtain a $\Pi_k$ bound on $\set{\mu\in\M(X)\st\integral{\phi}{\mu}\geq\beta(\phi)}=\left.\phi^*\right.^{-1}([\beta(\phi),\infty))$ for it to apply to $\Mmax(\phi)$ too. As $\phi^*$ is computable, the preimage of a $\Pi_k$ closed set is itself a $\Pi_k$ closed set of $\M(X)$. To obtain a bound, we thus simply need to find $k$ such that $[\beta(\phi),\infty)$ is a $\Pi_k$ subset of $\R$, \ie $\beta(\phi)$ is a $\Sigma_k$ real number (see Proposition \ref{prop:real-closed}). Noticing $\Pi_k\subset\Sigma_{k+1}$ (we simply add a leftmost useless $\inf$), the result follows from the first part of the theorem.
\end{proof}
\end{theorem}

\section{Brief Survey of the Complexity of Dynamical Systems}\label{sec:examples}

To ensure our study and hierarchy in Theorem \ref{thm:upper-bound} is, to a degree, rooted in reality, we will in this section list the dynamical systems most representative of (the early days of) ergodic optimisation, and discuss whether they satisfy \one and/or \two, whether they are computably compact and/or overt.

This section is purposefully an informal conversation, it includes no result that we consider new, though some facts are hard-to-pinpoint folklore.

\subsection{General Context}

As already explained, \one directly follows from the definition of the space $X$ and thus is easy to satisfy on ``naturally occurring'' spaces (and it will be satisfied on all the examples we discuss), whereas \two is a convoluted function of how $T$ acts on $X$, not always satisfied and harder to establish -- positively or negatively. Still, those two notions are theoretically unrelated and complementary, and there are dynamical systems for which only \one or \two (or none) holds.

We acknowledge the focus of ergodic optimisation in the past twenty years has shifted from the compact framework we use to non-compact settings (notably by having either an unbounded real space such as $\R^d$, or a shift space $\A^G$ on a non-compact alphabet such as $\A=\N$), which fall outside of the established scope of the paper, making \two notably out of reach.

Discontinuous transformations (such at the Gauss map $T:x\mapsto\frac{1}{x}-\left\lfloor\frac{1}{x}\right\rfloor$ on the unit interval) are also much more intrinsically out of scope, by virtue of being non-computable.

Another prevalent case that escapes the scope of this paper is that of continuous flows where, instead of a computable transformation $T:X\to X$, we have a family $T=\left(T_t\right)_{t\in\R^+}$ (such that $T_u\circ T_v=T_{u+v}$ for any $u,v\geq 0$, with $(t,x)\mapsto T_t(x)$ computable). This plays no role on whether \one holds, but makes the study of \two even harder -- however, assuming \two holds for the set of measures invariant for \emph{the whole family $T$}, the complexity upper bounds in the previous section should follow likewise.

\subsection{One Dynamic in a Three-Models-Shaped Trenchcoat}

Consider $P(x)=4x(1-x)$ the Ulam-von~Neumann map on $X=[0,1]$ (with the usual metric on $\R$), the expanding map $T(\theta)=2\theta$ on $Y=\R/\Z$ (with the corresponding induced metric) and the full shift $\sigma$ on $Z=\set{0,1}^\N$ (with $d(x,y)=2^{-\inf\set{i\st x_i\neq y_i}}$). All these models were mentioned in Jenkinson's 2006 lecture notes \cite{Jen06} (also available \href{https://webspace.maths.qmul.ac.uk/o.m.jenkinson/lectures13.pdf}{on their webpage}), which makes them staple examples for ergodic optimisation.

Due to its explicit symbolic structure, the full shift $(Z,\sigma,d)$ is a computationally well-behaved space, with the dense family $\left(w0^\infty\right)_{w\in\set{0,1}^*}$. The rational balls are given by the clopen cylinders $([w])_{w\in\set{0,1}^*}$, so that the covering set is not only \ce but \emph{decidable}, \one holds trivially. For periodic configurations $\omega = w^\infty$ (with $w\in\set{0,1}^*$ a finite word of length $p=\abs{w}$), the corresponding ``periodic measures'' $\mu_\omega=\frac{1}{p}\sum_{i=0}^{p-1}\delta_{\sigma^i(\omega)}\in\M_\sigma(Z)$ represent a uniformly computable dense sequence, so that \two holds.

Consider $\Gamma:Z\to Y$ defined as $\omega\mapsto\sum_{i\in\N}\frac{\omega_i}{2^i}$.
The map is not injective, as collisions between exactly two elements occur for any words of the form $u01^\infty$ and $u10^\infty$ (with a prefix $u\in\set{0,1}^*$), as well as $1^\infty$ and $0^\infty$.
This set of words $U$ can only support exactly two $\sigma$-invariant measures, on the fixed points $0^\infty$ and $1^\infty$. Likewise, looking at the image set $\Gamma(U)$, we obtain specifically diadyc fractions, which all get sent to the fixed point $0$ after finitely many iterations. Outside of those, $\Gamma:Z\setminus U\to Y\setminus\Gamma(U)$ is a bimesurable bijection such that $\Gamma\circ\sigma=T\circ\Gamma$, establishing notably a one-to-one correspondence between the invariant measures, and computably matches the (dense) periodic sequences (\ie a computation basis of $Z$) and (dense) non-dyadic rational numbers (\ie a computation basis of $Y$).

Likewise, with $\Psi:Y\to X$ defined as $\theta\mapsto\sin^2(\pi\theta)$, we have a non-injective computable map with two branches (it computably bijects $\left[0,\frac{1}{2}\right]$ with $[0,1]$ going upwards, and then $\left[\frac{1}{2},1\right]$ with $[0,1]$ going downwards, the only one-to-one points being $\Psi(0)=0$ and $\Psi\left(\frac{1}{2}\right)=1$). Notably, $\Psi(x)=\Psi(-x)=\Psi(1-x)$. Now, $\Psi\circ T=P\circ\Psi$, so once again, they both encode in superficially different ways the same core dynamical behaviour, and the dense family of periodic orbits in $Y$ thus gives a dense family of periodic orbits in $X$.

The point of all this is that the full shift $Z$ is much easier to study computably to obtain \two, which in turn implies that \two also holds for $Y$ and $X$ in a very straightforward way, much simpler than trying to find the uniformly computable and dense periodic orbits in $X$ directly (\ie by solving the polynomial equations $P\circ\cdots\circ P(x)=x$) and identifying the structure of $\M_P(X)$ itself.

This correspondence also allows us to exhibit ``weird'' invariant measures for $(X,P)$ or $(Y,T)$ corresponding to known atypical ones on the full shift, such as the (unique) invariant measure on the strictly ergodic Thue-Morse subshift.

\subsection{Other Models}

Let us mention some more models, taken from Jenkinson's more recent survey \cite{Jen19}.

Generalising the aforementioned expansion $T_0:x\mapsto 2x$ of $Y$ (on which \one holds), there is a whole family of Manneville-Pomeau expanding circle maps of the form $T_a:x\mapsto x + x^{1+\alpha}$ (with $\alpha\geq 0$). In this case, there is still a (topologically semi-conjugate) full shift hidden inside, as is more broadly the case for \emph{any} expanding map on $Y$ of any degree $d\geq 2$ \cite[Section 7.4]{KatHas03}. Identifying this full shift means identifying the \emph{branches} of the expanding map (whose endpoints are the premiages of $0$). For instance, as $T_a$ is of degree $2$ here, it would mean computing the unique $0<\gamma<1$ such that $\gamma+\gamma^{1+\alpha}=1$, by dichotomic search for example. We can then repeat this process by computing the two pre-images of $\gamma$, on and on to obtain the $2^k$ branches of $T_a^k:=\underset{k\text{ times}}{T_a\circ\cdots\circ T_a}$. By excluding this countable set $C$ of uniformly computable points (analogous to dyadic fractions in the case of $T_0$, whose orbits eventually sink in the fixed point $0$), we can thus simply map the remaining elements $x\in Y\setminus C\mapsto\left(\mathds{1}_{(\gamma,1)}\left(T_a^k(x)\right)\right)\in\set{0,1}^\N$, establishing by design a computable correspondence between invariant measures of the binary full shift $(Z,\sigma)$ and those of any Manneville-Pomeau circle map $\left(Y,T_a\right)$, provided $a$ is itself computable (and more broadly between the full shift $\llbracket 1,d\rrbracket^\N$ and any computable expanding map of degree $d$), which guarantees \two holds.

Another interesting example is the quintessential model of spin glasses, the XY model, in which we don't just look at \emph{one} spin in $Y$, but a whole glass in $Y^\Z$, with the shift $\sigma$ as a natural transformation. In this case, periodic configurations with ``rational'' spins in $\Q/\Z$ (or rather their projection on $Y$) provide both a computably enumerable dense family of $Y^\Z$ (hence define a notion of computability) and an induced dense family of invariant measures (so that \two holds). By using the same computable compactness as for $Y$ (with finite coverings by increasingly small rational intervals $B_k$) but on increasingly large windows $\llbracket -n, n\rrbracket\subset\Z$ of spins, one can easily reach the conclusion that \one still holds (by checking, for each potential covering $\left(B_{\left<k^i\right>}\right)_i$ of $Y^\Z$ by ``rational rectangles'' $B_{<k>}:=Y^\infty\times B_{k_{-n}}\times\dots\times B_{k_n}\times Y^\infty$, if we can find a fine enough covering of $Y$, extended as a covering of $Y^{\llbracket -n,n\rrbracket}$ by small ``rational cubes'', that refines $\left(B_{\left<k^i\right>}\right)_i$).

All the examples discussed so far are deeply related to a full shift space, but this is not necessarily the case in general.

\subsection{Generalising the Ulam-von~Neumann Map}

Generalising the Ulam-von~Neumann map gives us the family of logistic maps $f_\lambda:x\mapsto\lambda x(1-x)$ for $\lambda\in [0,4]$.

These maps are part of a broader family of continuous interval transformations $T:I\to I$, for which higly chaotic behaviours can occur, notably regarding which periodic orbits can exist (\emph{Period Three Implies Chaos} \cite{TieYie75}, and more broadly the existence of $p$-periodic orbits implies the existence of $q$-periodic orbits for any $p\succcurlyeq q$ in \emph{Sharkovskii's order} \cite{Sha64,ShaTol95}).

In the case of logistic maps $f_\lambda$, from $\lambda\leq 1$ (where $0$ is the unique attractor) to $\lambda>1+\sqrt{8}$ (where $f_\lambda$ admits a $3$-periodic orbit, thus of any other period), we cross many thresholds that change the maximal period (in Sharkovskii's order). Notably, starting from $\lambda\leq 1$, we encounter a series of period-doubling thresholds (two-periodic orbits appear beyond $\lambda=3$, $4$-periodic orbits appear beyond $\lambda=1+\sqrt{6}$, accumulating (at a geometric rate known as the \emph{Feigenbaum constant}) to a computable limit \cite[Theorem 6]{Hoy07} for which no proper algebraic expression is known.

Some of these thresholds correspond to the apparition of isolated periodic points, thus represent discontinuities in the map $\lambda\mapsto\Mmax\left(f_\lambda\right)$. For the usual reasons of undecidability of equality over computable real numbers, unless we are given supplementary information on an \emph{a priori} arbitrary computable number $\lambda$, it could lie on such a problematic threshold, in which case we will be unable to properly satisfy $ \two$ (even if the threshold $\lambda_t$ itself is a computable real number and we mathematically know that $\Mmax\left(f_{\lambda_t}\right)$ is computably overt).

A likewise potential obstruction to \two can be found in the existence of computable parameters $\lambda$ for which $f_\lambda$ has a non-computable SRB measure \cite{RojYam20,RojYam24}.

\subsection{Symbolic Shift Spaces}

For simplicity's sake, we will say a shift space $X$ is a (closed) subset of $\A^\N$ (with a finite alphabet $\A$) invariant under the shift action $\sigma$.
Symbolic shift spaces are ``everywhere'', beyond the works cited by Jenkinson's (extensive) surveys, for many reasons we will not detail. They will become our focal point in the following final section of this article.

As seen above a full shift can, with its very straightforward dynamics, deobfuscate the complex behaviour behind analytic systems such as the Ulam-von~Neumann map $x\mapsto4x(1-x)$ on the unit interval.

Given a set of finite words $\F\subset\A^*$ called \emph{forbidden patterns}, one can define the induced subshift $X_\F$ made of configurations avoiding $\F$. For any subshift $X$, denote $L(X)\subset\A^*$ its \emph{language} (each word in $L(X)$ is the finite prefix of an element of $X$), then the countable set $\F=\A^*\setminus L(X)$ is such that $X=X_\F$. 
 
This apparent structural simplicity of the full shift hides the very complex and rich behaviours that can occur on subshifts depending on the setting, whether we are looking at a one-sided or two-sided shift (in $\A^\Z$), whether it is a \emph{Subshift of Finite Type} (an SFT, such that $X=X_\F$ for a finite set $\F$), sofic (with $\F$ a regular language) or effective (with $\F$ a \ce subset of $\A^*$).

Another point of interest, which we will briefly discuss, is the case of \emph{higher-dimensional} subshifts, \ie closed subsets of $\A^{\Z^d}$ invariant under translations along every direction. Of interest here is the fact that for finite sets of forbidden patterns $\F$, \emph{unlike} the one-dimensional case where there is an algorithmic procedure to know whether $X_\F$ is non-empty (and if so, then we can computably enumerate increasingly long periodic orbits which give a dense family of invariant measures, such that \two holds), the higher-dimensional domino problem is \emph{undecidable} and there are explicit non-empty SFTs (such as the Robinson or the Kari-Čulík tiling) for which no periodic orbit exists or, even worse, where $\F$ is explicit but not a single point of $X_\F$ is a computable point of $\A^{\Z^2}$ \cite{Hanf74,Myers74} (so that even endowing it with a computable space structure would require at best careful work, before starting to discuss \one and \two). Consequently, the periodic orbits can no longer form an easily enumerable dense family to satisfy \two, as was the case in all the previous examples -- this argument might appear a bit fallacious given the Robinson tiling is uniquely ergodic (thus trivially satisfies \two) and its invariant measure is very well known and understood, but the point we are making here is that in the general two-dimensional case, it is \emph{impossible} to algorithmically obtain this information on $\M_T\left(X_\F\right)$ being given $\F$, and thus this provides a very natural class of dynamical systems \emph{simple to describe but hard to understand} for which \two is unknown in general.

Back to the one-sided one-dimensional case, in the following section, using graph arguments with a flavour similar to that of those used to decide whether $X_\F$ is non-empty and if so to find its periodic orbits, we will give algorithmic arguments to compute both $\beta(\phi)$ and $\Mmax(\phi)$ for finite-range rational potentials $\phi:\A^r\to\Q$.

\section{Finite-Time Polynomial Complexity for Markov Shifts}\label{sec:SFT}

In Theorem \ref{thm:upper-bound}, we identified a setting in which $\beta(\phi)$ is computable.

By assuming the space $X$ is computably compact (\one), we can enumerate arbitrarily accurate finite coverings of $\M_T(X)$ by rational balls, and each such covering gives us an accordingly close upper bound on $\beta(\phi)$. By assuming $\M_T(X)$ is computably overt (\two), we obtain a uniformly computable, countable, dense subset $S\subset\M_T(X)$ (\eg periodic orbits for a full shift), and getting arbitrarily good approximations of $\phi^*(\mu)$ for each $\mu\in S$ gives an arbitrarily close lower bound on $\beta(\phi)$. Putting those two together, assuming \one{+}\two, we obtain an algorithm that can approximate $\beta(\phi)$ with arbitrary precision.

The main drawback of this all-purposes approach is that we have no specific information on the structure of $(X,T)$ as a dynamical system, everything is abstracted away and obfuscated behind our computable objects. Consequently, we can provide no information on the \emph{computational complexity} of the method.

The goal of this final section is precisely to address this blind spot by considering a very explicit model both computationally simple to manipulate and mathematically well-understood, that of one-dimensional SFTs, sometimes also called \emph{Markov shifts} in the mathematical literature.

\subsection{Controlling the Maximising Measures for Well-Behaved Potentials}

Let $(X,T)$ be a compact dynamical system, with $\phi:X\to X$ a continuous potential. Intuitively, what is $\Mmax(\phi)$ expected to look like?

Assuming $\phi$ is constant on every $T$-orbit (which is a way too strong assumption), it is quite easy to see that $\beta=\max \phi$ and that $\Mmax(\phi)=\M_T\left(\phi^{-1}(\beta)\right)$. More reasonably, as $\phi$ gives a weight to every configuration, it informally induces an average weight for every orbit. By defining $Y$ as the union of all the $\phi$-maximal orbits, one can expect to have $\Mmax(\phi)=\M_T(Y)$, \ie the maximising measures are those supported by maximising orbits.

While this statement is not provably true in a general setting, it actually holds for some well-behaved dynamical systems (\eg weakly expanding like the one-sided shift $\A^\N$, or hyperbolic with weak local product like the two-sided shift $\A^\Z$ \cite{Bou01}), provided $\phi$ is regular-enough (it must satisfy the \emph{Walters condition} so that some version of \emph{Mañé’s lemma} holds).

The Walters condition on a continuous potential $\phi:X\to X$, as defined in Bousch's article \cite{Bou01}, is a sort of uniform continuity property for $\phi$ along trajectories of arbitrary lengths. More precisely, with $(X,d,T)$ a metric compact system, if the $T$-orbits of $x,y\in X$ stay $\epsilon$-close for some (arbitrary) amount $n\in\N$ of $T$-transitions, then
\[
\abs{\sum_{i=0}^n \phi\circ T^i(x)-\phi\circ T^i(y)}\leq h(\epsilon)\, ,
\]
with $h:\R^+\to\R^+$ a non-decreasing function independent of $n$, such that $\lim_0 h = 0$, called a \emph{Walters module} for $\phi$.

\begin{remark}
The notion of Walters module is independent from the computability of a potential $\phi$. Trivially, non-computable finite-range (thus Walters) potentials exist. Less obviously but still easily, we can exhibit hand-crafted non-Walters computable potentials for which $\beta(\phi)$ and $\M_\sigma(\phi)$ are explicit.
\end{remark}

Notably, in the case of a one-sided shift space $\A^\N$ (though every argument translates for two-sided shifts), with $d(x,y)=\abs{\A}^{-\min\set{i\in\N\st x_i\neq y_i}}$, the orbits of $x$ and $y$ being $\epsilon$-close for $n$ steps actually means they were $\frac{\epsilon}{\abs{\A}^i}$-close at step $n-i$. Thus, when $\phi$ has summable variations (\ie $F:\epsilon\mapsto\sup_{d(x,y)\leq\epsilon}\abs{\phi(x)-\phi(y)}$ is integrable on a neighbourhood of $0$), $\eta\mapsto\int_0^\eta F$ provides a Walters module. This argument holds in particular for finite-range potentials, which are \emph{locally constant}.

This class of interactions has been widely studied, and is heavily related to SFTs. Notably, using cohomology arguments, Bousch proved that if $\phi$ is a finite-range potential, then there is an SFT $X$ such that $\Mmax(\phi)=\M_\sigma(X)$.
What he did \emph{not} explain is how one can construct, or \emph{compute}, a set $\F$ of forbidden patterns such that $X=X_\F$. This will be our focal point from now on.

\begin{remark}
There are fundamental obstructions to an exact computation of $\F$ in finite time, due to the undecidability of equality over computable real numbers. If we consider a general computable $\phi:\A^r\to\R$, we cannot have an algorithm providing $\F$ in finite time.

Consider the following counter-example, on the binary alphabet $\A=\set{0,1}$. Let $\phi:\A^2\to\R$ such that $\phi(01)=\phi(10)=-1,\phi(00)=0$ and $\phi(11)=\lambda$ is some computable real number. Denote $X$ the SFT that supports maximising measures. Switching from $0$ to $1$ in a configuration in $\A^\N$ always results in a clear negative penalty compared to $\beta=\max(0,\lambda)$, corresponding to the fixed-point configurations $0^\infty$ and/or $1^\infty$. In this case, either $\lambda>0$ (so $X=\set{1^\infty}$), $\lambda<0$ (so $X=\set{0^\infty}$) or $\lambda=0$ (so $X=\set{0^\infty,1^\infty}$). While $\lambda\neq 0$ can be observed in finite time, the case $\lambda=0$ can only be \emph{de facto} concluded after infinitely many computations, and even providing a \emph{subset} of $X$, or providing some measure $\mu\in\Mmax(\phi)$, cannot be done in finite time for the same reasons.

Using this counterexample, we can see how impossible it is to exhibit even \emph{one} measure in $\Mmax(\phi)$. After finitely many computation steps, the most we can know mathematically about $\lambda$ is an open rational interval $I$ such that $\lambda\in I$. For a given such interval $I$, assuming $0\in I$ too, any measure supported by $\set{0^\infty,1^\infty}$ \emph{could} be in $\Mmax(\phi)$ (when $\lambda=0$), but $I$ also matches mutually exclusive cases with $\Mmax(\phi)$ being $\set{\delta_{0^\infty}}$ if $\lambda<0$, or $\set{\delta_{1^\infty}}$ if $\lambda>0$. Any measure $\mu$ \emph{could} be a wrong guess, and the interval $I$ itself could be arbitrarily small so there is no hope for some sort of approximation $I\mapsto\mu_I$ such that $d\left(\mu_I,\Mmax(\phi)\right)\to 0$ as $\diam(I)\to 0$.
This computability obstruction is intrinsic, it cannot be fixed by algorithmic means unless we are provided with an ``oracle'' telling us whether any two computable numbers are equal.

The best we could do in such a setting, adapting the ideas and algorithms that will be introduced below, is to replace each computable number used in $\phi$ by an approximate rational interval of length $\frac{1}{2^n}$, and eliminate the configurations that \emph{cannot} be in $X$, that \emph{cannot} maximise $\phi^*(\mu)$, rather than trying to find one that is. This would give us an algorithm $(\phi,n)\mapsto\F_n\subset\A^r$ such that $\Mmax(\phi)\subset\M_\sigma\left(X_{\F_n}\right)$. In this scenario, the set of forbidden patterns $\F_n$ is increasing, stationary as $\A^r$ is finite, and converges to the ``true'' $\F$ such that $X_\F$ is the support of maximising measures. This fact was already a consequence of Theorem \ref{thm:upper-bound}, as being computed ``from the outside'' by elimination is exactly what $\Pi_1$ means, and we are in a setting where \one{+}\two holds.
\end{remark}

Hence, to provide an algorithm computing $\phi\mapsto\F$ (so that $\Mmax(\phi)=\M_\sigma\left(X_\F\right)$), we will need some further assumptions on the values taken by $\phi$, and clarify what we mean here by algorithm and complexity.

\subsection{From Cohomology to Graph Algorithms}

In the following, we consider $(\mathds{F},+,\times)$ an abstract ordered subfield of $\R$. The field $\mathds{F}$ could simply be $\R$, or the subfield of all computable real numbers, but as discussed in the previous remark this would evade our ``practical computations'' approach. Thus, in the following algorithmic considerations, we assume that we can ``compute'' the sum $(x,y)\mapsto x+y$, the rational product $(q,x)\mapsto q\times x$ (with $q\in\Q\subset\mathds{F}$) and test equality $(x,y)\mapsto (x=y)$ and inequality $(x,y)\mapsto (x<y)$ for our elementary operations. This holds for $\Q$ (by encoding a rational as pair of integers), but also (in a very unpractical way) for the field of all real algebraic numbers.

\begin{remark}
The expression ``elementary operations'' is a bit deceitful here, as one would expect our individual variables in $\mathds{F}$ to take $O(1)$ space, and our operations to take $O(1)$ time, but this obviously cannot hold for an infinite set such as $\Q$.

It must be understood as already-implemented black boxes, which we will use as basic blocks of the algorithm, and which we will count to describe the time/space complexity of said algorithm. In practical implementations, we must choose between using $O(1)$ space/time objects (such as 64bit floating-point numbers) with rounding errors (breaking $\phi\mapsto\F$ around fringe cases), or a dynamically-sized structure (such as Python long integers and fractions) with perfect precision but operations whose complexity depends on the size of the input.
\end{remark}

We will consider a finite-range potential $\phi:\A^\N\to\mathds{F}$. As $\phi$ is locally constant, there is a cylinder size $r$ such that, on any cylinder $[w]$ with $w\in\A^r$, $\phi([w])$ is a singleton. We can thus equivalently see this potential as a function $\phi:\A^r\to\mathds{F}$, and this is how it will be ``encoded'' in a computer, by hardcoding its range $r$ in its definition.

A useful tool, both for the next proposition and for what follows afterwards, will be the \emph{De Bruijn} directed graph $G_n$ on words of length $n>0$, with transitions between words overlapping over $n-1$ letters, \ie $G_n:=\left(\A^n,\set{ aw\to wb\st a,b\in\A,w\in\A^{n-1}}\right)$.

\begin{proposition}
Let $\left(\A^\N,\sigma\right)$ be the one-sided full shift on $\A$. Let $\phi:\A^r\to\mathds{F}$ be a potential of finite range $r\in\N$ (so that $\phi(x):=\phi\left(x_0\dots x_{r-1}\right)$ for an actual infinite word $x\in\A^\N$), and $X$ be the (smallest) SFT such that $\Mmax(\phi)=\M_\sigma(X)$ (\ie it is, formally, the closure of the union of the supports of the measures in $\Mmax(\phi)$, though there must necessarily exist a measure in $\Mmax(\phi)$ fully supported on $X$).

Then a finite word satisfies $w\in L(X)$ (\ie it is the prefix of some element $x\in X$) if and only if there exists a periodic word $\omega\in [w]$ for which $\overline{\phi}(\omega)$ -- the (well-defined) average value of $\phi$ over $\omega$ -- satisfies $\overline{\phi}(\omega)=\beta(\phi)\in\mathds{F}$.

\begin{proof}
The $(\Leftarrow)$ implication is direct: if we have $w\in\A^*$ and $\omega\in[w]\subset\A^\N$ a $p$-periodic word satisfying $\overline{\phi}(\omega)=\beta$, then by averaging over the finite $\sigma$-orbit of $\omega$, we obtain the invariant measure $\mu_\omega=\frac{1}{p}\sum_{i=0}^{p-1}\delta_{\sigma^i(\omega)}\in\M_\sigma\left(\A^\N\right)$ such that $\integral{\phi}{\mu_\omega} =\overline{\phi}(\omega)=\beta$, \ie $\mu_\omega\in\Mmax(\phi)$. As $\mu_\omega([w])>0$, it follows that $w$ is in the language of the support of $\mu_\omega$ hence in $X$.

For the $(\Rightarrow)$ implication, we will need to introduce an auxiliary function \emph{cohomologous} to $\phi$. Following the footsteps of Thierry Bousch \cite[Théorème 1, Proposition 10]{Bou01}, we know that $\phi$ is \emph{cohomologous} to a likewise finite-range function $h:\A^r\to\R$ such that $h\leq\beta$.

Given the aforementioned article is written in French, let us give some further clarifications that might otherwise remain lost in translation. Here, cohomologous must be understood as the existence of a continuous map $u\in\Cont\left(\A^\N,\R\right)$ such that $h=\phi+u-u\circ\sigma$. In this case, he obtains a $u$ that satisfies the functional equation $u(x)=\max_{a\in\A}(\phi-\beta + u)(ax)$. Denoting $\tilde{\phi}=\phi-\beta$, we obtain $u(x)=\max_{a_k,\dots,a_1\in\A}\sum_{i=1}^k\tilde{\phi}\left(a_i\dots a_1x\right) + u\left(a_k\dots a_1 x\right)$ by iterating. As $u$ is uniformly continuous on the compact $\A^\N$, for any $\epsilon>0$, we have a rank $k$ such that $\sup_{w\in\A^k,x,y\in\A^\N}\abs{u(wx)-u(wy)}\leq\epsilon$. Furthermore, if $x$ and $y$ share the same first $r-1$ symbols, then the term $\sum_{i=1}^k\tilde{\phi}(\dots)$ is the same in the $\max$ for $u(x)$ and $u(y)$, in which case we can easily deduce that $u(x)=u(y)$ as $k\to\infty$. In other words, we can represent it as $u:\A^{r-1}\to\R$.

As mentionned by Bousch, if we denote $K=h^{-1}(\beta)\subset\A^\N$ (with $h:\A^\N\to\R$), then as $h\leq\beta$, for any (not necessarily invariant) probability measure $\mu$, we have $\integral{h}{\mu}\leq\beta$, with an equality if and only if $h=\beta$ $\mu$-almost-surely, that is if $\supp(\mu)\subset K$. For invariant measures, we can furthermore say that $\mu\in\Mmax(h)$ if and only if $\supp(\mu)\subset Y:=\bigcap_{i\in\N}\sigma^{-i}(K)$. Now, a vertex $w_0$ of the De Bruijn graph $G_r$ satisfies $w_0\in L(Y)$ if and only if there exists an infinite path $\left(w_i\right)_{i\in\N}$ such that for any $i\in\N$, $h\left(w_i\right)=\beta$.

In particular, if we consider $\F=\A^r\setminus h^{-1}(\beta)$ (with $h:\A^r\to\R$), we obtain a set of forbidden patterns such that $Y=X_\F$ is precisely the SFT induced by $\F$ (and then by removing vertices/words that fall outside of a strongly connected component we would obtain $X$ itself). From thereon, it would be expected that we compute $h$ directly, and then define/describe $X$ by restricting $G_r$ to strongly connected components of $h$-maximising vertices. However, the main issue with this approach is that $h$ itself is \emph{known to exist} by compactness arguments, but the proof of its existence doesn't provide an algorithmic procedure to compute it exactly, not even as a limit with known speed of convergence.
The end of the proof, and this whole proposition, aim at allowing us to work around this obstruction with a practical argument.

All this context being laid out, consider $w\in L(X)$, so that by ergodic decomposition, we have some \emph{ergodic} measure $\mu\in\Mmax(\phi)$ for which $\mu([w])>0$. By ergodicity, we have a configuration $\omega\in X\subset Y$ for which $\mu=\mu_{\omega}$ (\ie it is the well-defined Cesàro sum of the sequence $\left(\delta_{\sigma^i(\omega)}\right)_{i\in\N}$). In particular, $\mu([w])$ is the (positive) frequency at which $\sigma^i(\omega)\in [w]$. Equivalently, $\omega\in Y$ can be seen as an infinite path $\left(w_i\right)$ on the $h$-maximising vertices of $G_r$. By finiteness of $\A^r$, we can fix two steps $s,t$ such that $s+\abs{w}<t$, $\sigma^s(\omega),\sigma^t(\omega)\in [w]$ and $w_s=w_t$. We then define the $(t-s)$-periodic configuration $\omega'=\left(w_s,w_{s+1},\dots,w_{t-1},w_t=w_s,\dots\right)$. Now, $w$ has a positive frequency in $\omega'$ (at least $\frac{1}{t-s}$), so $\mu_{\omega'}([w])>0$ still, and $w$ is a prefix of $\omega'$, \ie $\omega'\in[w]$.

The final nail in the coffin is that, by construction:
\begin{itemize}
\item $\omega'$ is periodic, so that $\overline{h}\left(\omega'\right)=\integral{h}{\mu_{\omega'}}$ and $\overline{\phi}\left(\omega'\right)=\integral{\phi}{\mu_{\omega'}}$ are both well-defined,
\item $\omega'$ only visits $h$-maximising vertices of $G_r$, so $\overline{h}\left(\omega'\right)=\beta$,
\item $\phi$ and $h$ are cohomologous, so $\integral{\phi}{\mu_{\omega'}}=\integral{h}{\mu_{\omega'}}$, 
\end{itemize}
so we know that $\overline{\phi}\left(\omega'\right)=\beta$, which concludes the proof.
\end{proof}
\end{proposition}

\begin{corollary}
Let us denote $n=\abs{\A^r}=\abs{\A}^r$. For $w\in\A^r$, define $F\left(w\right)$ as the maximum of $\overline{\phi}(\omega)$ among all the cycles $\left(w_i\right)$ on $G_r$ of period $p\leq n$ with $w_0=w$. Then we have:
\begin{itemize}
\item $F(w)\leq\beta$,
\item $F(w)=\beta$ if and only if $w\in L(X)$, 
\item $X=X_\F$ with $\F=\A^r\setminus F^{-1}(\beta)$.
\end{itemize}

\begin{proof}
The first item follows from the fact that \emph{any} cycle $\omega$ satisfies $\overline{\phi}(\omega)=\integral{\phi}{\mu_\omega}\leq\beta$.

For the second item, clearly $F(w)=\beta$ gives the existence of a cycle $\omega$ that guarantees $w\in L(X)$ in the previous proposition. Conversely, $w\in L(X)$ guarantees the existence of a $h$-maximising $p$-periodic cycle $\omega=\left(w_i\right)$, but it might be \emph{too long}. However, if $p>n$, then $\omega$ contains a repetition, some ranks $0\leq i<j<p$ such that $w_i=w_j$ and so, by skipping all that happens in-between, $\left(w_0,\dots,w_{i-1},w_i=w_j,w_{j+1},\dots,w_{p-1}\right)$ gives a shorter $h$-maximising cycle such that $w_0=w$ (notably, this property breaks for a general $w\in L(X)$: if it is too long to be uniquely determined by $w_0$, by shortening the cycle, we might break the $\omega\in[w]$ property).
By induction, we can always obtain a short-enough period $p\leq n$, so that $F(w)=\overline{\phi}(\omega)=\beta$.

For the third item, we simply need to remark that $F^{-1}(\beta)\subset\A^r$ is precisely the set $L(X)\cap \A^r$, made of all the vertices of $G_r$ that belong to some strongly connected $h$-maximising component, so $X=X_\F$ directly follows.
\end{proof}
\end{corollary}

Consequently, $X$ can be characterised directly through an algorithmic study of $\phi$ itself, without involving any knowledge about the cohomologous function $h=\phi+u-u\circ\sigma$.

\begin{remark}
The previous proposition and corollary both consider a full shift $\left(\A^\N,\sigma\right)$ as the underlying dynamical system. One might wonder what happens if we consider instead an SFT $X_\G$ (defined through $\A$ and $\G\subset\A^*$) to maximise the potential among measures in $\M_\sigma\left(X_\G\right)$.

Without loss of generality, by adding ``padding'' symbols to the elements of $\G$ and/or $\phi$, we might assume that $\G\subset\A^r$ and $\phi:\A^r\to\mathds{F}$.

Then the previous results apply likewise by considering instead periodic words defined on the restriction of the De Bruijn graph $G_r$ \emph{induced} on the subset of vertices $\A^r\setminus\G$.
\end{remark}

\subsection{Explicit Algorithms for the Maximum Mean Weight Cycles}

From the previous analysis, it follows that in the case of a finite-range potential $\phi:\A^r\to\mathds{F}$, computing $\beta$ and $\F$ amounts to computing the average weights of cycles of a directed graph with the vertices $\A^r$.

In the case of a strongly connected directed graph $G=(V,E)$ with weights on the \emph{edges}, Karp's algorithm \cite{Kar78} is known to compute the maximal mean weight of a cycle in $O(\abs{V}\times\abs{E})$ steps (the bound is optimal).
Define $H_k(u,v)$ as the maximum weight of a path of length $k$ from $u$ to $v$ in $G$, and fix an arbitrary source vertex $s\in V$.
The algorithm proceeds by first iteratively computing the families $\left( H_k(s,v)\right)_{v\in V}$ for increasingly large values of $k\leq\abs{V}$ (each step has a cost $\abs{E}$, by testing all the edges one by one to lengthen current maximal paths), and then returns $\beta =\max_{v\in V}\min_{0\leq k <\abs{V}}\frac{H_{\abs{V}}(s,v)-H_k(s,v)}{\abs{V}-k}$ (this final $\max\min$ computation takes $O\left(\abs{V}^2\right)$ steps, and as $G$ is strongly connected, we know that $\abs{E}\geq\abs{V}$).

By giving the start weight $\phi(u)$ to each edge $(u,v)$ (or the end weight $\phi(v)$, which doesn't change the weight of cycles) on the De Bruijn graph $G_r$, using Karp's algorithm computes $\beta$ in $O\left(\abs{\A}^{2r+1}\right)$ steps of computation.

To describe $\F$, the main limitation of Karp's algorithm is that it sort of stumbles upon $\beta$ by happenstance, by finding (at least) one maximising cycle (in a not-so-obvious way, missed by Karp in his article offering a simpler but incorrect argument, and which took four decades to be noticed \cite{ChaCon17}), but without ever reaching a full knowledge of which vertices and cycles realise that maximum, and we could not find appropriate algorithms in the existing literature.

It might be enough to repeatedly run Karp's algorithm for all potential sources $s\in V$ in order to identify all the relevant vertices, but this is not proven and rather than doing this we might as well adapt the algorithm itself directly.
Hence, to compute $F$, taking inspiration from Karp's algorithm, we propose the following procedure. First, compute $\left(H_k(u,v)\right)_{u,v\in V}$ inductively for increasingly large values of $k\leq\abs{V}$: we can initialise $H_0=0$, and given $H_k$, start with $h_{k+1}=-\infty$ and iterate over $u\in V$ and $e=(x,v)\in E$ to replace the current value of $h_{k+1}(u,v)$ by $H_k(u,x)+\phi(x,v)$ if it is bigger, so that $h_{k+1}=H_{k+1}$ when the iteration stops (\ie computing $H_{k+1}$ takes $O(\abs{V}\times\abs{E})$ steps once $H_k$ is known). Finally, simply have $F(u)=\max_{1\leq k\leq\abs{V}}\frac{H_k(u,u)}{k}$. This algorithm allows us to compute not only the mean max weight but also which vertices can realise it, in $O\left(\abs{V}^2\abs{E}\right)$ steps of computation.

Once reframed in our setting (by shifting the weights from the vertices to the edges), we can compute the function $F:\A^r\to\mathds{F}$ (and thus deduce $\F$ from the maximising set of $F$) in $O\left(\abs{\A}^{3r+1}\right)$ steps. This algorithm has been implemented and tested in both Python and Rust, and the code is available on a public repository \cite{Gay26}.

\begin{remark}
As mentioned, Karp's algorithm is optimal for arbitrary graphs. While this is beyond the scope of the paper, we conjecture here that for similar reasons, the $O\left(\abs{V}^2\abs{E}\right)$ bound we obtain is optimal to produce all the vertices that maximise the mean cycle weight.

However, in both cases, this doesn't mean that there is no room for (algorithmic) improvements for our specific question, either because unlike in the aforementioned algorithms, our weights are on the vertices rather than the edges, or because there are specific optimisations to be found for the De Bruijn graphs.
\end{remark}

\sloppy
\hbadness=10000
\printbibliography

\end{document}